\documentclass[final,onefignum,onetabnum]{siamart250211}
\usepackage{mathptmx}
\usepackage{setspace}
\usepackage[numbers]{natbib}
\usepackage{algorithm}
\usepackage{stmaryrd}
\usepackage{mathtools}
\usepackage{amsmath} 
\usepackage{amssymb}
\usepackage{amsfonts}
\usepackage{mathrsfs}
\usepackage{enumitem}

\usepackage{diagbox}
\usepackage{capt-of}
\usepackage{bm}
\usepackage{xcolor}

\usepackage[normalem]{ulem}
\usepackage{oplotsymbl}
\usepackage{multirow}
\usepackage{algpseudocode}
\usepackage{tabto}
\usepackage[usestackEOL]{stackengine}[2013-10-15]
\usepackage{subcaption} 
\definecolor{orange1}{HTML}{FDAE61}
\definecolor{green1}{HTML}{66C2A5}

\newsiamremark{remark}{Remark}
\usepackage{tikz}
\usetikzlibrary{patterns,positioning}

\DeclareMathOperator{\spn}{span}

\def\x{\bm{x}}
\def\l{\bm{\ell}}
\def\E{\bm{E}}
\def\j{\bm{j}}
\def\O{\mathcal{O}}

\usepackage{tikz}
\usetikzlibrary{patterns,positioning}

\newcommand{\meshdots}[5]{%
  \begin{scope}[shift={(#1,#2)}]
    \pgfmathtruncatemacro{\ma}{2^(#3)}
    \pgfmathtruncatemacro{\mb}{2^(#4)}
    \foreach \p in {1,...,\ma}{
      \foreach \q in {1,...,\mb}{
        \fill[black] ({#5*(\p-0.5)/\ma},{#5*(\q-0.5)/\mb}) circle (0.027);
      }
    }
  \end{scope}
}

\newcommand{\fullmeshdots}[5]{%
  \begin{scope}[shift={(#1,#2)}]
    \pgfmathtruncatemacro{\ma}{2^(#3)}
    \pgfmathtruncatemacro{\mb}{2^(#4)}
    \foreach \p in {1,...,\ma}{
      \foreach \q in {1,...,\mb}{
        \fill[black] ({#5*(\p-0.5)/\ma},{#5*(\q-0.5)/\mb}) circle (0.020);
      }
    }
  \end{scope}
}

\newcommand{\CONCACE}{{Inria, Concace joint team between Airbus CR\&T, Cerfacs and Inria}, {Talence}, {France}, \email{clement.guillet@inria.fr}}
\headers{}{C. Guillet}
\title{Exact hierarchical algorithms for accelerating particle--mesh coupling in sparse-grid particle-in-cell methods}

\author{Cl\'ement Guillet\thanks{\CONCACE} }

\begin{document}
\maketitle

\begin{abstract}		
In this paper, we propose two hierarchical algorithms for charge deposition and electric-field interpolation that apply to both the sparse-grid combination technique (SGCT-PIC) and hierarchical sparse-grid (HSG-PIC) particle-in-cell methods. The two algorithms are inspired by the fast multipole method (FMM) and exploit clusters of particles associated with a directed acyclic graph (DAG) of particle-populated boxes to reduce the number of particle--mesh interactions. The particle--mesh interactions are governed by piecewise-polynomial kernels, so that the associated multipole expansions are exact, requiring neither truncation nor approximation, and are valid in both near- and far-field regions, thereby eliminating the need for multipole-to-local translations. The arithmetic complexity of the charge deposition and field interpolation steps is reduced from $\O(p^d n^{d-1}N)$ to $\O(p^d(N+M))$, where $M=2^{dn}$ denotes the number of full-grid mesh nodes and is typically no larger than the particle population in the considered regime, $M\lesssim N$. Numerical experiments in two-dimensional configurations demonstrate charge-deposition speedups of $8.2\times$--$66.9\times$ for SGCT-PIC and $3.1\times$--$18.8\times$ for HSG-PIC, and field-interpolation speedups of $4.1\times$--$62.6\times$ and $4.2\times$--$13.7\times$, respectively, depending on the particle-per-cell ratio, while preserving the exact particle--mesh interactions. The speedups increase with the particle-per-cell ratio, reflecting the reduced dependence of the hierarchical algorithms on the number of particles and their increasing advantage for large particle populations.
\end{abstract}

\begin{keywords}
Sparse-grid; particle-in-cell; fast multipole method; hierarchical algorithms; combination technique; particle methods; plasma physics
\end{keywords}

\begin{MSCcodes}
65N75; 68W99
\end{MSCcodes}

\section{Introduction}
The simulation of kinetic plasmas requires the computation of the collective dynamics of a large number of charged particles interacting through long-range electromagnetic forces.  Direct particle-to-particle evaluation of these interactions has a computational complexity of $\O(N^2)$ for $N$ particles, making it prohibitively expensive for large-scale simulations. Particle-in-cell (PIC) methods~\cite{hockney88,birdsall18} overcome this difficulty by introducing a spatial mesh on which the field equations are solved, thereby avoiding the direct evaluation of all particle-to-particle interactions. For an FFT-based field solver, the resulting computational complexity is typically $\O(p^d(N+M\log M))$, where $M=2^{dn}$ denotes the number of mesh nodes, with $n$ the mesh resolution, $p$ the degree of the interpolation functions, and $d$ the spatial dimension. In particular, the number of mesh nodes is typically much smaller than the number of particles, $M\ll N$. More specifically, PIC methods couple a Lagrangian discretization of the Vlasov equation, based on the integration of particle trajectories, with a mesh-based discretization of Poisson's equation for the computation of the self-consistent electrostatic field. 

The particle--mesh coupling, however, comes at a fundamental cost: sampling the distribution function
with a finite number of particles inherently introduces statistical noise into mesh-based quantities. This statistical error decreases slowly, at a rate proportional to the inverse square root of the mean number
of particles per cell, so that, in many practical simulations, achieving a reasonable statistical error requires a prohibitively large number of particles.

Sparse-grid methods, originally introduced in~\cite{griebel90,bungartz91} to mitigate the curse of dimensionality in the numerical solution of partial
differential equations (PDEs), have been incorporated into PIC algorithms to reduce statistical noise~\cite{ricketson17}. Two different frameworks have been
proposed. In the sparse-grid combination technique (SGCT) PIC method
~\cite{ricketson17,deluzet22,guillet24}, mesh-based quantities are reconstructed by linearly combining solutions computed on a hierarchy of coarse component
grids with appropriately chosen combination coefficients. The method has been assessed on classical kinetic plasma benchmarks~\cite{muralikrishnan21,deluzet22} and applied to low-temperature, including collisional, plasma discharges and drift instabilities in Hall thrusters~\cite{garrigues21,garrigues21-1,garrigues24,garrigues24-1}. In the hierarchical sparse-grid (HSG) PIC method
~\cite{guillet25,deluzet26,deluzet26-1}, mesh-based quantities are instead computed variationally using a Galerkin method on an approximation space
constructed from a truncated tensor-product decomposition of one-dimensional multiresolution spaces~\cite{bungartz91,bungartz04}. In both approaches, the sparse-grid mesh provides a larger average number of particles per cell than a full fine grid, thereby allowing the total number of particles to be substantially reduced while maintaining a comparable level of statistical noise.

However, the particle--mesh coupling remains a major computational bottleneck, primarily because the sparse-grid mesh consists of $\O(n^{d-1})$ component grids or hierarchical subspaces, depending on the formulation. Although the compact support of the basis functions limits the number of interactions per particle, these interactions must be computed
on each component grid or for each subspace. Consequently, the cost of the particle--mesh operations, namely charge deposition and electric-field interpolation, scales as $\mathcal{O}(p^d n^{d-1}N )$. A first strategy to reduce the computational cost of electric-field interpolation, called hierarchization, was proposed in~\cite{deluzet22-1}. The field contributions computed on the component grids are first combined onto the full grid using unidirectional principles, after which the reconstructed field is interpolated from the full grid to the particles. This reduces the complexity of the interpolation to $\O(p^d(N+M))$. However, the intermediate recombination of the component-grid fields on the full grid introduces an interpolation error. To reduce the computational cost of charge deposition, parallelization strategies have been developed, exploiting cache efficiency and multiple levels of parallelism on shared-memory CPUs~\cite{deluzet22-1} and GPUs~\cite{deluzet23}. Reported speedups exceed $100\times$ on 128 CPU cores and on a single NVIDIA Tesla V100 GPU in three dimensions.

In this work, we follow a different strategy to reduce the arithmetic complexity of the particle--mesh coupling, drawing inspiration from hierarchical algorithms developed for $N$-body problems~\cite{hernquist88}, such as tree-based methods~\cite{barnes86,appel85} and, more specifically, the fast multipole method (FMM)~\cite{greengard87,ambrosiano88,carrier88}. The FMM exploits a hierarchical organization of particles into clusters to avoid explicitly evaluating all pairwise interactions, which otherwise requires $\O(N^2)$ operations for $N$ particles. Groups of particle-to-particle interactions are instead approximated by interactions between multipole and local representations in regions where the kernel is sufficiently smooth, resulting in a computational complexity that scales as $\O(N)$ or $\O(N\log N)$ while controlling the approximation error. Inspired by this hierarchical principle, we replace the direct evaluation of all particle--mesh-node interactions in the charge-deposition and electric-field operations by interactions between clusters of particles and mesh nodes, reducing the arithmetic complexity from $\O(p^d n^{d-1}N)$ to $\O(p^d(N+M))$. In the present setting, these interactions are represented by exact multipole and local polynomial expansions, rather than by truncated approximations, so that no additional error is introduced.

Although the proposed method shares the overall hierarchical structure of the FMM, it is not a direct application of the method to sparse-grid particle--mesh interactions and differs from the FMM in several fundamental respects. First, in the present setting, particle--mesh interactions are governed by piecewise-polynomial kernels~\cite{deluzet25,guillet26}. Consequently, the associated multipole expansions are exact and require neither truncation nor approximation. Moreover, these expansions are globally valid, rather than being restricted to a far-field region as in the FMM. A single expansion can therefore represent the contribution of a particle cluster to all target mesh nodes, without requiring separate multipole-to-local translations.  Second, the underlying hierarchical structure is a directed acyclic graph (DAG) of particle-populated boxes at different resolutions, in which a child box may have multiple parents. This DAG follows the same multilevel dyadic partition of the domain as the sparse-grid mesh, which provides a simple and inexpensive admissibility criterion for identifying interactions that vanish identically.

We propose two hierarchical, FMM-inspired algorithms for the charge deposition and electric-field interpolation steps that apply to both the SGCT- and HSG-PIC methods. Their efficiency is assessed through numerical experiments in two-dimensional configurations. Depending on the particle-per-cell ratio, charge-deposition speedups of $8.2\times$--$66.9\times$ for SGCT-PIC and $3.1\times$--$18.8\times$ for HSG-PIC, and field-interpolation speedups of $4.1\times$--$62.6\times$ and $4.2\times$--$13.7\times$, respectively, are reported. These speedups translate into overall speedups of $7\times$--$45\times$ for SGCT-PIC and $3\times$--$11\times$ for HSG-PIC over the total PIC cycle. In all cases, the proposed and standard implementations produce identical results up to roundoff errors. 

The remainder of the report is organized as follows. \Cref{sec:prel} introduces the PIC model and particle--mesh operations for the SGCT- and HSG-PIC methods. \Cref{sec:method} presents the proposed hierarchical algorithms and establishes their arithmetic complexity. \Cref{sec:num} reports numerical comparisons with the standard implementations. Finally, \cref{sec:concl} summarizes the conclusions and discusses perspectives for future work.

 \section{Preliminaries}
 \label{sec:prel}
 \subsection{Continuous model}
 Let $d$ denote the dimensionality of the problem, and $\Omega\subset \mathbb{R}^d$ be the spatial domain of interest, which, in this report, is the $n$-dimensional torus $\Omega=\mathbb{T}^d$. 

The ions are treated as a neutralizing background and all variables are expressed in dimensionless form, with characteristic scales being the Debye length and the plasma period, given by
\[
\lambda_D = \sqrt{\frac{\varepsilon_0 T_e}{q_e n_0}}, \qquad 
\omega_p^{-1} =\left(\sqrt{\frac{q_e n_0}{m_e \varepsilon_0}}\right)^{-1}.
\]
In this normalization setting, the electron mass $m_e$, characteristic temperature $T_e$, charge $q_e$,  typical electron density $n_0$ and vacuum permittivity $\varepsilon_0$ are set to unity.

The noncollisional, nonrelativistic, dimensionless Vlasov--Poisson system with an external magnetic field $\bm{B}: \Omega\to \mathbb{R}^d$ is considered:
\begin{equation*}
    \left\{\begin{aligned}
   & \frac{\partial f}{\partial t}
+ \bm{v} \cdot \bm{\nabla}_{\bm{x}} f
- \left( \bm{E} + \bm{v} \times \bm{B} \right) \cdot \bm{\nabla}_{\bm{v}} f = 0, \\ 
&\nabla \cdot \bm{E}=1-\rho, \quad \bm{E} = -\bm{\nabla}\Phi,
\end{aligned}
\right.
\end{equation*}
where the electron charge density is defined by
\[
\rho = \int_{\mathbb{R}^d} fd \bm{v}.
\]
In the above, $f$ is the phase-space distribution function of the electron species, $\bm{E}$ and $\Phi$ are the electric field and potential. The notation $\bm{\nabla}_*$ denotes the gradient operator with respect to the variable $*$, and we use the shorthand notation $\bm{\nabla} := \bm{\nabla}_{\bm{x}}$. 

 \subsection{PIC approximations and sparse-grid PIC methods}
 Particle-in-cell (PIC) approximations are reference particle methods applied to plasma physics. Recently, sparse-grid approximations have been incorporated into PIC algorithms to reduce their computational cost. Two distinct approaches have been proposed, leading to the sparse-grid combination technique PIC (SGCT-PIC)~\cite{ricketson17,deluzet22} and the hierarchical sparse-grid PIC (HSG-PIC)~\cite{deluzet26} methods. 
 
 These two methods preserve the overall structure of standard PIC algorithms, consisting of charge deposition, field solve, field interpolation, and particle push, while differing in the implementation of these individual steps. We first describe the common ingredients shared by standard and sparse-grid PIC approximations and then detail the implementation of the particle--mesh coupling steps.

\subsubsection{Common ingredients of PIC approximations}
\paragraph{Particle approximation of the distribution function}
The distribution function is approximated by a collection of $N$ macro-particles sampling the phase space. Denoting by $\delta$ the Dirac delta distribution, the particle approximation of the distribution function is defined by
\begin{align*}
f_N(\bm{x},\bm{v},t)
=
\sum_{s=1}^N
w_s
\delta\big(\bm{x}-\bm{x}_s(t)\big)
\delta\big(\bm{v}-\bm{v}_s(t)\big),
\end{align*}
where $w_s$ is the weight of the $s$th particle, representing the volume of phase space carried by that particle. We assume uniform particle weights given by
\begin{align*}
w_s
=
\frac{1}{N}
\int_{\Omega} \rho(\bm{x},0)\,\mathrm{d}\bm{x}.
\end{align*}
The position and velocity of the $s$th particle, denoted by $(\bm{x}_s(t),\bm{v}_s(t))$ for $s=1,\ldots,N$, are time-dependent quantities.

\paragraph{Evolution of particles}
The particles evolve according to Newton's equations of motion:
\begin{align*}
\frac{d  \bm{x}_s(t)}{d t} = \bm{v}_s(t), \qquad \frac{d  \bm{v}_s(t)}{d t} = \Big(\bm{v}_s(t)\times \bm{B}\big(\bm{x}_s(t)\big) +\bm{E}\big(\bm{x}_s(t),t\big)\Big).
\end{align*}
These equations are typically discretized in time using explicit schemes such as the leapfrog method or Runge--Kutta integrators. At each time step, denoted by $t^\kappa$ for $\kappa=0,\ldots,T/\Delta t$ where $\Delta t $ is the time step size, the electric field is computed on a spatial mesh from the charge density, which must first be approximated from the particle distribution.

\paragraph{Spatial mesh}
Let $n\in\mathbb{N}_0$ denote the spatial mesh resolution, and let the associated mesh size be $h=2^{-n}$. The standard PIC method is based on a uniform Cartesian mesh with mesh size $h$ in each spatial direction.

More generally, let $\bm{\ell}=(\ell_1,\ldots,\ell_d)\in\{0,\ldots,n\}^d$ be a multi-index, and define the associated anisotropic mesh size by
\[
h_{\bm{\ell}}:= \big(2^{-\ell_1},\ldots,2^{-\ell_d}\big).
\]
We introduce the anisotropic Cartesian grid
\begin{align}
\label{eq:omega_h}
\Omega_{h_{\bm{\ell}}}:= \left\{\bm{x}_{\bm{\ell},\bm{j}}:=\bm{j}h_{\bm{\ell}}\;\middle|\;\bm{j}\in I_{h_{\bm{\ell}}}\right\}\subset\Omega,
\quad
I_{h_{\bm{\ell}}}:=\llbracket 0,h_{\ell_1}^{-1}-1\rrbracket\times\cdots\times \llbracket 0,h_{\ell_d}^{-1}-1\rrbracket \subset\mathbb{N}^d,
\end{align}
which serves as a building block for the meshes used in sparse-grid PIC methods.


\paragraph{Charge density deposition onto the mesh}
To derive an estimator for the particle density, we integrate the distribution function over the velocity space and obtain the raw Monte Carlo density estimator:
\begin{align*}
\rho_{N}(\bm{x},t) = \sum_{s=1}^N w_s \delta\big(\bm{x}-\bm{x}_s(t)\big)\,.
\end{align*}
This estimator cannot be directly used on a discrete mesh due to its singular nature. To obtain a smoothed density function on the grid, a regularization operation is required. Different regularization operators, which will be described in detail later, are considered in the standard-, SGCT-, and HSG-PIC methods. After regularization, an approximation of the Monte Carlo density estimator is obtained on the mesh.

\paragraph{Electric field computation}
The regularized Monte Carlo density estimator serves as the right-hand side of a Poisson equation, which is solved on the mesh using a mesh-based method such as
finite difference, fast Fourier transform (FFT) or a Galerkin method. The electric field is subsequently computed on the grid, either by differentiating the resulting electrostatic potential or by direct evaluation, depending on the discretization.

\paragraph{Electric field interpolation}
The electric field computed on the mesh is interpolated to the particle positions. To maintain self-consistency, PIC methods typically employ the same
functions for projecting particle charges onto the grid and interpolating the electric field back to the particle positions.

The sequence of operations consisting of charge deposition onto the mesh, computation of the electric field, interpolation of the field to the particle positions, and particle advancement defines one cycle, or time iteration, of PIC algorithms.

In this report, we focus on the particle--mesh interaction steps, namely, charge deposition and electric-field interpolation. We detail these operations for the SGCT-PIC, and HSG-PIC methods, and refer the reader to~\cite{deluzet22,deluzet26} for a description of the remaining steps.

\subsubsection{SGCT-PIC particle--mesh coupling}
\paragraph{Spatial mesh}
The SGCT-PIC method uses a family of independent, anisotropic Cartesian grids, referred to as component grids and defined by~\cref{eq:omega_h}. The mesh is composed of a selection of component grids lying on the $d$ outer diagonals of level $n,\ldots,n+d-1$ of the level lattice as depicted in~\cref{fig:sgct} and defined as
\begin{align}
\label{eq:sgct:levels}
\Omega_h = \bigcup_{\l \in \mathcal{L}_h} \Omega_{h_{\l}}, \quad \mathscr{L}_h &:= \left\{ \bm{\ell}\in\mathbb{N}^d \;\middle|\; n \leq |\bm{\ell}|_1 \leq n+d-1\right\}, \qquad n= |\log h|.
\end{align} 
In two dimensions, the number of component grids and total mesh nodes are given by $2n-1$ and $(3n-1)2^n$, respectively. More generally, these quantities scale as $\mathcal{O}(n^{d-1})$ and $\mathcal{O}(n^{d-1}2^n)$, respectively.

\begin{figure}[h!]
\centering
\centering
\begin{tikzpicture}[scale=.7]
  \def\n{4}
  \def\cellsize{0.915}
  \foreach \i in {0,...,5}{
    \foreach \j in {0,...,5}{
      \draw[draw=black!12,thin] (\i,\j) rectangle ++(\cellsize,\cellsize);
    }
  }
  \foreach \i/\j in {0/4,1/3,2/2,3/1,4/0}{
    \draw[fill=orange1!20,draw=black,thick] (\i,\j) rectangle ++(\cellsize,\cellsize);
    \fullmeshdots{\i}{\j}{\i}{\j}{\cellsize}
  }
  \foreach \i/\j in {0/5,1/4,2/3,3/2,4/1,5/0}{
    \draw[fill=green1!20,draw=black,thick] (\i,\j) rectangle ++(\cellsize,\cellsize);
    \fullmeshdots{\i}{\j}{\i}{\j}{\cellsize}
  }
  \foreach \i/\j in {0/4,1/3,2/2,3/1,4/0}{
    \node[circle,draw,fill=white,inner sep=0pt,minimum size=11pt] at (\i+0.475,\j+1.18) {\scriptsize $-$};
  }
  \foreach \i/\j in {0/5,1/4,2/3,3/2,4/1,5/0}{
    \node[circle,draw,fill=white,inner sep=0pt,minimum size=11pt] at (\i+0.475,\j+1.18) {\scriptsize $+$};
  }
  \draw[->,thick] (-0.6,0) -- (6.3,0) node[right] {$\ell_1$};
  \draw[->,thick] (0,-0.6) -- (0,6.3) node[above] {$\ell_2$};
  \draw[dashed,black!50] (4.95,-0.3) -- (4.95,5.95);
  \draw[dashed,black!50] (-0.3,4.95) -- (5.95,4.95);
  \node at (4.475,-0.45) {$n$};
  \node at (-0.45,4.475) {$n$};
\end{tikzpicture}
\caption{Schematic representation in the two-dimensional case of the component grids defined by \cref{eq:sgct:levels}. Each cell of the level lattice $(\ell_1,\ell_2)$ shows the full nodal mesh of the corresponding anisotropic grid, of resolution $h_{\bm{\ell}}=(2^{-\ell_1},2^{-\ell_2})$, i.e., $2^{\ell_1}\times 2^{\ell_2}$ nodes.}
\label{fig:sgct}
\end{figure}

\paragraph{Charge density deposition}
For each component grid, a regularized Monte-Carlo estimator is defined by convolving the raw density estimator with a localization kernel, also referred to as a shape function. After simplifications, it reads as
\[
\rho_{h_{\l}}(\x,t^\kappa) = \sum_{s=1}^N w_s W^p_{h_{\l}}\big(\x-\x_s(t^\kappa)\big), \quad \text{where}~~ W^p_{h_{\bm{\ell}}}(\bm{x}) = \prod_{i=1}^{d} \mathcal{W}^p_{h_{\ell_i}}(x_i)
\] 
are the shape functions, constructed as tensor products of one-dimensional shape functions, and locally supported on the anisotropic mesh size. Typical choices of shape functions are tensor products of univariate functions~\cite{hockney88,birdsall18}, including piecewise constant functions (nearest-grid-point), piecewise linear functions (cloud-in-cell), or higher-order piecewise polynomials (quadratic B-splines for the triangular-shape-cloud).

Because the shape functions have compact support, each particle contributes to only $p+1$ mesh nodes in each spatial direction on each component grid. Hence, the arithmetic complexity of the charge-deposition step scales as $\mathcal{O}(p^d n^{d-1}N)$.

\paragraph{Electric field interpolation}
A continuous electric field interpolant is reconstructed for each component grid by combining the nodal values obtained by solving the Poisson equation, denoted by $\left(\hat{\bm{E}}^\kappa_{h_{\bm{\ell}}}\right)_{\bm{j}}$, as 
\begin{align}
    {\bm{E}}^\kappa_{h_{\bm{\ell}}}(x) := \sum_{\bm{j} \in I_{h_{\bm{\ell}}}}\left(\hat{\bm{E}}^\kappa_{h_{\bm{\ell}}}\right)_{\bm{j}} \tilde{W}^p_{h_{\bm{\ell}}}\big(\bm{x}-\bm{x}_{\bm{j}}\big),
\end{align}
  where the interpolation shape functions are defined by:
 \[
 \tilde{W}^p_{h_{\l}} = \left(\prod_{i=1}^d h_{\ell_i}\right) W^p_{h_{\l}}.
 \]

The component-grid fields are then recombined at the particle positions according to the sparse-grid combination-technique formula:
\begin{equation}
\label{eq:sgct:combi}
\bm{E}_{h}^{\mathcal{C}}(\bm{x}_s) :=
\sum_{k=0}^{d-1}(-1)^k\binom{d-1}{k}
\sum_{|\bm{\ell}|_1 = n+(d-1)-k} \bm{E}_{h_{\bm{\ell}}}(\bm{x}_s),
\end{equation}
which combines the fields obtained on each of the $d$ diagonals of \cref{eq:sgct:levels} with alternating signs and binomial weights, as depicted by~\cref{fig:sgct}.  This step requires to combine all component-grid contributions for each particle, totaling $\O(p^2n^{d-1}N)$ arithmetic operations.

\subsubsection{HSG-PIC particle--mesh coupling}
Unlike the SGCT-PIC method, which combines independent solutions computed on a collection of full anisotropic component grids, the HSG-PIC method~\cite{deluzet26,deluzet26-1} constructs a single approximation space as a truncated hierarchical sum of small, local subspaces, ordered according to their contribution to the approximation error. The field equation and density estimator are then formulated variationally and solved using a Galerkin method on this approximation space.

\paragraph{Spatial mesh}
The sparse-grid mesh is constructed from a finite-dimensional approximation space defined as a truncated tensor product of univariate B-spline spaces.

For $p\in\mathbb{N}$ and a level $\ell\in\mathbb{N}$ with associated mesh size $h_{\ell}:=2^{-\ell}$, we consider the space of univariate B-splines of degree $p$ and regularity $\mathrm{C}^{p-1}$, which admits a nodal basis denoted by $\{\varphi^p_{h_{\ell},j}\}_{j\in I_{h_\ell}}$. From this univariate B-spline space, we can define the hierarchical subspaces, or increments, by
\begin{align*}
  W_{h_{\l}}(\Omega) = \mathrm{span}\left\{\varphi^p_{h_{\l},\j} \;\middle|\; \j\in J_{h_{\l}}\right\}, \qquad  \varphi^p_{h_{\l},\j}(\x):=\bigotimes_{i=1}^d\varphi^p_{h_{\ell_i},j_i}(x_i),
\end{align*}
where the hierarchical index set is defined by
\begin{align*}
J_{h_{\l}} := J_{h_{\ell_1}}\times\cdots\times J_{h_{\ell_d}}\subset I_{h_{\l}},\qquad
J_{h_{\ell_i}} := \{j\in I_{h_{\ell_i}} \mid j \text{ odd}\}.
\end{align*}
The $L^2$-based sparse-grid approximation space that defines the spatial mesh is defined by
\begin{align*}
V_{h}(\Omega):= \bigg\{ v = \sum_{|\bm{\ell}|_1\leq n} w_{h_{\bm{\ell}}},
\quad \text{where} ~ w_{h_{\bm{\ell}}} \in W_{h_{\bm{\ell}}}(\Omega)
\bigg\},
\end{align*}
and represented in~\cref{fig:hsg_spaces}.

\begin{figure}[h!]
\centering
\begin{tikzpicture}[scale=0.75]
  \def\n{4}
  \def\cellsize{0.893}
  \foreach \i in {0,...,\n}{
    \foreach \j in {0,...,\n}{
      \pgfmathtruncatemacro{\s}{\i+\j}
      \ifnum\s>\n
        \draw[draw=black!25,thin] (\i,\j) rectangle ++(\cellsize,\cellsize);
      \else
        \draw[fill=green1!22,draw=black,thick] (\i,\j) rectangle ++(\cellsize,\cellsize);
        \meshdots{\i}{\j}{\i}{\j}{\cellsize}
      \fi
    }
  }
  \draw[->,thick] (-0.6,0) -- (\n+1.3,0) node[right] {$\ell_1$};
  \draw[->,thick] (0,-0.6) -- (0,\n+1.3) node[above] {$\ell_2$};
  \draw[dashed,black!50] (\n+\cellsize,-0.3) -- (\n+\cellsize,\n+\cellsize);
  \draw[dashed,black!50] (-0.3,\n+\cellsize) -- (\n+\cellsize,\n+\cellsize);
  \node at (\n+0.45,-0.5) {$n$};
  \node at (-0.5,\n+0.45) {$n$};
\end{tikzpicture}
\caption{Schematic representation in the two-dimensional case, for a fixed target level $n$, of the hierarchical subspaces $W_{h_{\bm{\ell}}}(\Omega)$ selected in the sparse-grid $\mathrm{L}^2$-based approximation space. Each filled cell of the level lattice $(\ell_1,\ell_2)$ shows the new mesh points introduced by the corresponding hierarchical increment, whose number $m(\ell_i)=2^{\ell_i-1}$ (and $m(0)=1$) per direction increases with the level. The space retains the lower-diagonal levels $|\bm{\ell}|_1\leq n$.}
\label{fig:hsg_spaces}
\end{figure}

\paragraph{Charge density deposition}
The right-hand side of the Poisson equation is defined through the variational formulation
\[
\left( \Pi_h \rho_N, v_h \right)_{\mathrm{L}^2(\Omega)}
=
\left( \rho_N, v_h \right)_{\mathrm{L}^2(\Omega)},
\qquad
\forall v_h \in V_h(\Omega),
\]
where $\Pi_h$ denotes the Galerkin projection operator onto the approximation space $V_h(\Omega)$. Testing this formulation against the basis functions of $V_h(\Omega)$ and exploiting the shifting property of the Dirac delta distribution, the corresponding right-hand side, denoted by $b_{\l,\j}$, is given by
\[
b_{\l,\j}
=
\sum_{s=1}^N w_s \varphi_{h_{\l,\j}^p}(\x_s).
\]

\paragraph{Electric field interpolation}
The electric field is evaluated at the particles' position
by direct differentiation of the discrete potential, obtained by solving the Poisson equation using a Galerkin method on the approximation space. It reads
\begin{equation}
\label{eq:field_eval}
\bm{E}_{h}^{\mathcal{H}}(\bm{x}_s)
= -\sum_{|\bm{\ell}|_1\leq n}
\sum_{\bm{j}\in J_{h_{\bm{\ell}}}}
\beta_{\bm{\ell},\bm{j}}\,
\bm{\nabla}\varphi_{h_{\bm{\ell}},\bm{j}}^p(\bm{x}_s),
\end{equation}
i.e., the gradient is computed by
differentiating each hierarchical basis function analytically and
summing its contribution, weighted by the hierarchical surplus, denoted by $\beta_{\bm{\ell},\bm{j}}$ and obtained by solving the Poisson equation. 

\begin{remark}
The arithmetic complexities of the different steps of the standard-, SGCT-, and
HSG-PIC algorithms are summarized in~\cref{tab:3} for the two-dimensional case. 
The computational cost of the SGCT- and HSG-PIC methods is dominated by the two
particle--grid operations, namely, charge deposition and field interpolation.
Owing to the noise-reduction properties of sparse-grid approximations, the
number of particles can be reduced for the sparse-grid methods while maintaining
a comparable noise level to that of the standard method. For a fixed ratio of
particles per cell, $P_c$, the total number of particles scales as
$\mathcal{O}(P_c2^{2n})$ for the standard method and as
$\mathcal{O}(P_cn2^n)$ for the sparse-grid methods.
\end{remark}

\begin{table}[h]
\centering
\caption{Arithmetic complexity, measured in terms of the number of arithmetic
operations, of the standard-, SGCT-, and HSG-PIC algorithms in the two-dimensional case.}
\label{tab:3}
\begin{tabular}{lll}
\hline
Operation                            & PIC method & Arithmetic complexity \\ \hline
\multirow{2}{*}{Charge deposition}   & Standard   & $\O(p^2N)$    \\
                                     & SGCT/HSG   & $\O(p^2nN)$  \\ \hline
\multirow{2}{*}{Field interpolation} & Standard   & $\O(p^2N)$    \\
                                     & SGCT/HSG   & $\O(p^2nN)$  \\ \hline
\multirow{2}{*}{Field computation}   & Standard   & $\O(2^{2n})$          \\
                                     & SGCT/HSG   & $\O(pn2^n)$           \\ \hline
\multirow{2}{*}{Push particles}      & Standard   & $\O(N)$       \\
                                     & SGCT/HSG   & $\O(N)$      
\end{tabular}
\end{table}

\section{Hierarchical algorithms for particle--mesh coupling}
 \label{sec:method}
In this section, we introduce two algorithms that compute efficiently the interaction steps coupling the particles and the sparse-grid mesh, namely charge deposition and electric-field interpolation.
We present the algorithms for the two-dimensional SGCT-PIC method. The presentation naturally extends to three-dimensional configurations and to the HSG-PIC method, substituting the shape functions by the basis functions, the component grids by the hierarchical subspaces, etc.

The method is strongly inspired from the hierarchical methods used to accelerate simulations arising in $N$-body problems, such as the FMM. Specifically, the method share the same skeleton than the FMM, but has different component steps. Because of the common structure, we choose to name the steps of our algorithms according to the FMM terminology.  

\paragraph{Main steps}
We first briefly introduce the different steps of the algorithms before describing them in more details in the remainder of this section. The charge density deposition algorithm computes the interactions between sources (particles) and targets (mesh nodes). Its main steps are as follows:

\begin{enumerate}[label=(\roman*)]
\item Particle-to-multipole (P2M): The source information carried by the particles is aggregated into moments, which are computed at the leaves of a directed acyclic graph (DAG) of particle-populated boxes. These moments contain all the particle information required to construct multipole expansions of the charge density. Since the multipole expansions exactly represent the kernel, i.e., the shape function, over the entire domain, they also serve as local expansions.
\item Multipole-to-multipole (M2M): The particle moments are aggregated through the hierarchy of the DAG, from the leaves to the root, in an upward pass. The aggregation is performed in direction-wise sweeps: first a unidirectional $x$-sweep, followed by several unidirectional $y$-sweeps. 
\item Local-to-grid (L2G): The interactions between the boxes and the mesh nodes are computed by forming the local expansions from the particle moments. Owing to the structure of the DAG, only a limited number of boxes interact with each mesh node, resulting in a small number of nonzero interactions.
\end{enumerate}

The electric field interpolation algorithm computes the reverse interactions between sources (mesh nodes) and targets (particles). It consists of the following steps:
\begin{enumerate}[label=(\roman*)]
\item Grid-to-local (G2L): The electric field contributions from the component grids are gathered into local expansions associated with their cells. Since these expansions exactly represent the field over the whole domain, they can also be used as multipole expansions.
\item Local-to-local (L2L):  The local expansions are accumulated in a downward sweep from the component-grid cells to the leaf boxes of the DAG. At the leaves, the expansions from the different component grids are linearly combined according to the combination technique.
\item Local-to-particles (L2P): The recombined local expansions are evaluated at the particle positions to reconstruct the electric field. 
\end{enumerate}

\subsection{Hierarchical particle structure}
\label{sec:particle_dag}
The two algorithms rely on a directed acyclic graph (DAG) of particle-populated boxes at different resolutions. The DAG is constructed from a multilevel dyadic partition of the domain, analogous to that underlying the sparse-grid
mesh, but with boxes instead of mesh nodes. Unlike the tree structure commonly used in FMM, the DAG allows boxes to be coarsened independently in each direction. Consequently, a child box may have multiple parent boxes, resulting in a DAG rather than a tree structure.

 The starting point of the construction of the hierarchical DAG is the definition of boxes, organized into a multilevel hierarchy of different resolutions. For a multi-index $\l = (\ell_1,\ell_2) \in \{1,\dots,n\}^2$, corresponding to the dyadic mesh size $h_{\l}=(2^{-\ell_1},2^{-\ell_2})$,
and a multi-index $\j = (j_1,j_2)\in I_{h_{\l}}$, we 
define the associated dyadic box by
\[
 \omega_{\l,\j}:=[j_1h_{\ell_1},(j_1+1)h_{\ell_1})\times[j_2h_{\ell_2},(j_2+1)h_{\ell_2})\subset \Omega.
\]

\begin{definition}[Level-$\boldsymbol\ell$ layer of the DAG]
The level-$\l$ layer of the DAG is defined as the collection of
the occupied (with particles) boxes at resolution $\l$,
\[
  \mathcal T_{h,\l}
  :=
  \Big\{ \omega_{\l,\j} ~|~
  \exists\, s \in \{1,\dots,N\} \text{ s.t. } \x_s \in \omega_{\l,\j}\Big\}.
\]
We also introduced the particle index set associated to each box, defined by
\[
P(\omega_{\l,\j}) := \{s ~|~ \x_s \in \omega_{\l,\j}\}.
\]
\end{definition}

We denote by $ \omega_{n,\j}$ the boxes corresponding to the finest layer, with $\l=(n,n)$, and called them the leaves of the DAG. 
The hierarchical DAG is formed by the union of the boxes from the level-$\l$ layers, with
\[
\l\in \mathcal{D}_h:=\{(\ell_1,n), ~~ \ell_1=1,\ldots,n\}\cup\mathcal{L}_h,
\]
together with the parent--child relations induced by the successive directional sweeps. 
\begin{definition}[Parent--child relation]
Starting from the leaves on the finest layer, the first sweep successively coarsens the boxes in the $x_1$ direction. Thus, for
$\boldsymbol{\ell}'=\boldsymbol{\ell}-\mathbf e_1$,
a box $\omega_{\boldsymbol{\ell},\mathbf j}$ is a child of
$\omega_{\boldsymbol{\ell}',\mathbf j'}$ if
\[
\omega_{\boldsymbol{\ell},\mathbf j}
\subset
\omega_{\boldsymbol{\ell}',\mathbf j'}.
\]
The resulting $x_1$-coarsened layers then serve as the starting point for independent sweeps in the $x_2$ direction. For each fixed $\ell_1$, and
$\boldsymbol{\ell}'=\boldsymbol{\ell}-\mathbf e_2$,
the same inclusion relation defines the parent--child relation,
\[
\omega_{\l,\j}
\subset
\omega_{\l',\j'}.
\]
\end{definition}

An example of hierarchical DAG is shown in~\cref{fig:dag}. By construction,
the DAG is sparse: only boxes containing at least one particle are instantiated.
Hence, for each level $\boldsymbol{\ell}$, the number of boxes satisfies
\[
\big|\mathcal{T}_{h,\boldsymbol{\ell}}\big|
\leq
\min\big(N,\,2^{|\boldsymbol{\ell}|_1}\big).
\]

\begin{figure}[h!]
\centering
\begin{tikzpicture}[
    >=stealth,
    cell/.style={
        fill=green1!22,
        draw=black,
        thick
    },
    intermediatecell/.style={
        fill=gray!12,
        draw=black!35,
        dashed,
        thick
    },
    domainbox/.style={
        dashed,
        draw=black!45,
        thin
    },
    intermediatebox/.style={
        dash pattern=on 1pt off 1pt,
        draw=black!25,
        thin
    },
    xarrow/.style={
        ->,
        thick,
        black!65
    },
    yarrow/.style={
        ->,
        thick,
        black!65
    },
    particle/.style={
        circle,
        fill=black,
        inner sep=1.2pt},
    fadedparticle/.style={
        circle,
        fill=black!35,
        inner sep=1.2pt}
]

\def\L{2.4}
\def\DX{3.5}
\def\DY{3.4}


\begin{scope}[xshift=0cm,yshift=2*\DY cm]
\node[font=\small\bfseries] at (1.2,2.75)
    {$\boldsymbol{\ell}=(3,3)$};
\draw[domainbox] (0,0) rectangle (\L,\L);
\foreach \i in {0,...,7} {
    \foreach \j in {0,...,7} {
        \draw[black!20]
            ({0.3*\i},{0.3*\j})
            rectangle ({0.3*(\i+1)},{0.3*(\j+1)});
    }
}
\draw[cell] (0,1.8) rectangle (0.3,2.1);
\draw[cell] (0.3,2.1) rectangle (0.6,2.4);
\draw[cell] (0.6,1.8) rectangle (0.9,2.1);
\draw[cell] (0.9,2.1) rectangle (1.2,2.4);
\draw[cell] (1.2,0) rectangle (1.5,0.3);
\draw[cell] (1.8,0.6) rectangle (2.1,0.9);
\node[particle] at (0.25,2.05) {};
\node[particle] at (0.45,2.25) {};
\node[particle] at (0.85,2.05) {};
\node[particle] at (1.05,2.25) {};
\node[particle] at (1.45,0.25) {};
\node[particle] at (2.05,0.85) {};
\end{scope}

\begin{scope}[xshift=\DX cm,yshift=2*\DY cm]
\node[font=\small\itshape] at (1.2,2.75)
    {$\boldsymbol{\ell}=(2,3)$};
\draw[domainbox] (0,0) rectangle (\L,\L);
\foreach \i in {0,1,2,3} {
    \foreach \j in {0,...,7} {
        \draw[black!12]
            ({0.6*\i},{0.3*\j})
            rectangle ({0.6*(\i+1)},{0.3*(\j+1)});
    }
}
\draw[cell] (0,1.8) rectangle (0.6,2.1);
\draw[cell] (0,2.1) rectangle (0.6,2.4);
\draw[cell] (0.6,1.8) rectangle (1.2,2.1);
\draw[cell] (0.6,2.1) rectangle (1.2,2.4);
\draw[cell] (1.2,0) rectangle (1.8,0.3);
\draw[cell] (1.8,0.6) rectangle (2.4,0.9);
\node[particle] at (0.25,2.05) {};
\node[particle] at (0.45,2.25) {};
\node[particle] at (0.85,2.05) {};
\node[particle] at (1.05,2.25) {};
\node[particle] at (1.45,0.25) {};
\node[particle] at (2.05,0.85) {};
\end{scope}

\begin{scope}[xshift=2*\DX cm,yshift=2*\DY cm]
\node[font=\small\bfseries] at (1.2,2.75)
    {$\boldsymbol{\ell}=(1,3)$};
\draw[domainbox] (0,0) rectangle (\L,\L);
\foreach \i in {0,1} {
    \foreach \j in {0,...,7} {
        \draw[black!20]
            ({1.2*\i},{0.3*\j})
            rectangle ({1.2*(\i+1)},{0.3*(\j+1)});
    }
}
\draw[cell] (0,1.8) rectangle (1.2,2.1);
\draw[cell] (0,2.1) rectangle (1.2,2.4);
\draw[cell] (1.2,0) rectangle (2.4,0.3);
\draw[cell] (1.2,0.6) rectangle (2.4,0.9);
\node[particle] at (0.25,2.05) {};
\node[particle] at (0.85,2.05) {};
\node[particle] at (0.45,2.25) {};
\node[particle] at (1.05,2.25) {};
\node[particle] at (1.45,0.25) {};
\node[particle] at (2.05,0.85) {};
\end{scope}

\begin{scope}[xshift=\DX cm,yshift=\DY cm]
\node[font=\small\bfseries] at (1.2,2.75)
    {$\boldsymbol{\ell}=(2,2)$};
\draw[domainbox] (0,0) rectangle (\L,\L);
\foreach \i in {0,1,2,3} {
    \foreach \j in {0,1,2,3} {
        \draw[black!20]
            ({0.6*\i},{0.6*\j})
            rectangle ({0.6*(\i+1)},{0.6*(\j+1)});
    }
}
\draw[cell] (0,1.8) rectangle (0.6,2.4);
\draw[cell] (0.6,1.8) rectangle (1.2,2.4);
\draw[cell] (1.2,0) rectangle (1.8,0.6);
\draw[cell] (1.8,0.6) rectangle (2.4,1.2);
\node[particle] at (0.25,2.05) {};
\node[particle] at (0.45,2.25) {};
\node[particle] at (0.85,2.05) {};
\node[particle] at (1.05,2.25) {};
\node[particle] at (1.45,0.25) {};
\node[particle] at (2.05,0.85) {};
\end{scope}

\begin{scope}[xshift=2*\DX cm,yshift=\DY cm]
\node[font=\small\bfseries] at (1.2,2.75)
    {$\boldsymbol{\ell}=(1,2)$};
\draw[domainbox] (0,0) rectangle (\L,\L);
\foreach \i in {0,1} {
    \foreach \j in {0,1,2,3} {
        \draw[black!20]
            ({1.2*\i},{0.6*\j})
            rectangle ({1.2*(\i+1)},{0.6*(\j+1)});
    }
}
\draw[cell] (0,1.8) rectangle (1.2,2.4);
\draw[cell] (1.2,0) rectangle (2.4,0.6);
\draw[cell] (1.2,0.6) rectangle (2.4,1.2);
\node[particle] at (0.35,2.05) {};
\node[particle] at (0.85,2.15) {};
\node[particle] at (1.45,0.35) {};
\node[particle] at (2.0,0.85) {};
\end{scope}

\begin{scope}[xshift=0cm,yshift=0cm]
\node[font=\small\bfseries] at (1.2,2.75)
    {$\boldsymbol{\ell}=(3,1)$};
\draw[domainbox] (0,0) rectangle (\L,\L);
\foreach \i in {0,...,7} {
    \foreach \j in {0,1} {
        \draw[black!20]
            ({0.3*\i},{1.2*\j})
            rectangle ({0.3*(\i+1)},{1.2*(\j+1)});
    }
}
\draw[cell] (0,1.2) rectangle (0.3,2.4);
\draw[cell] (0.3,1.2) rectangle (0.6,2.4);
\draw[cell] (0.6,1.2) rectangle (0.9,2.4);
\draw[cell] (0.9,1.2) rectangle (1.2,2.4);
\draw[cell] (1.2,0) rectangle (1.5,1.2);
\draw[cell] (1.8,0) rectangle (2.1,1.2);
\node[particle] at (0.25,2.05) {};
\node[particle] at (0.45,2.25) {};
\node[particle] at (0.85,2.05) {};
\node[particle] at (1.05,2.25) {};
\node[particle] at (1.45,0.25) {};
\node[particle] at (2.05,0.85) {};
\end{scope}

\begin{scope}[xshift=\DX cm,yshift=0cm]
\node[font=\small\bfseries] at (1.2,2.75)
    {$\boldsymbol{\ell}=(2,1)$};
\draw[domainbox] (0,0) rectangle (\L,\L);
\foreach \i in {0,1,2,3} {
    \foreach \j in {0,1} {
        \draw[black!20]
            ({0.6*\i},{1.2*\j})
            rectangle ({0.6*(\i+1)},{1.2*(\j+1)});
    }
}
\draw[cell] (0,1.2) rectangle (0.6,2.4);
\draw[cell] (0.6,1.2) rectangle (1.2,2.4);
\draw[cell] (1.2,0) rectangle (1.8,1.2);
\draw[cell] (1.8,0) rectangle (2.4,1.2);
\node[particle] at (0.25,2.05) {};
\node[particle] at (0.45,2.25) {};
\node[particle] at (0.85,2.05) {};
\node[particle] at (1.05,2.25) {};
\node[particle] at (1.45,0.25) {};
\node[particle] at (2.05,0.85) {};
\end{scope}

\draw[xarrow]
    (2.4,2*\DY+1.2) -- (\DX,2*\DY+1.2);
\node[font=\scriptsize,fill=white,inner sep=1pt]
    at (2.95,2*\DY+1.45)
    {$x_1$-coarsening};

\draw[xarrow,black!45,dashed]
    (\DX+2.4,2*\DY+1.2) -- (2*\DX,2*\DY+1.2);
\node[font=\scriptsize,fill=white,inner sep=1pt]
    at (6.45,2*\DY+1.45)
    {$x_1$-coarsening};


\draw[yarrow]
    (1.7,2*\DY) -- (1.7,2.45);
\node[
    font=\scriptsize,
    rotate=90,
    fill=white,
    inner sep=1pt
]
    at (2.0,\DY+1)
    {$x_2$-coarsening};

\draw[yarrow]
    (\DX+1.7,2*\DY) -- (\DX+1.7,\DY+2.45);
\node[
    font=\scriptsize,
    rotate=90,
    fill=white,
    inner sep=1pt
]
    at (\DX+2.0,2*\DY-0.05)
    {$x_2$-coarsening};

\draw[yarrow]
    (\DX+1.7,\DY) -- (\DX+1.7,2.45);
\node[
    font=\scriptsize,
    rotate=90,
    fill=white,
    inner sep=1pt
]
    at (\DX+2.0,\DY-0.05)
    {$x_2$-coarsening};

\draw[yarrow]
    (2*\DX+1.7,2*\DY) -- (2*\DX+1.7,\DY+2.45);
\node[
    font=\scriptsize,
    rotate=90,
    fill=white,
    inner sep=1pt
]
    at (2*\DX+2.0,2*\DY-0.05)
    {$x_2$-coarsening};

\draw[->,very thick,black!45]
    (-0.3,2*\DY+2.5)
    -- (-0.3,0.1);
\node[
    rotate=90,
    font=\scriptsize,
    fill=white,
    inner sep=2pt
]
    at (-0.5,1.5*\DY)
    {upward pass};

  \draw[->,thick,black] (2*\DX+0.3,1.75) -- (2*\DX+1.0,1.75)
       node[right,font=\scriptsize] {child--parent relation};

  \node[particle] at (2*\DX+0.6,1.45) {};
  \node[right,font=\scriptsize] at (2*\DX+1,1.425) {particle};

\end{tikzpicture}

\caption{Schematic representation of the hierarchical DAG and the directional
sweeps, starting from the finest level $\boldsymbol{\ell}=(3,3)$. A first sweep
coarsens in the $x_1$ direction (horizontal arrows), then, independent
sweeps coarsen in the $x_2$ direction (vertical arrows). Only the level-$\l$ layers with $\l\in\mathcal{D}_h$ and their occupied
boxes are instantiated.}
\label{fig:dag}
\end{figure}

\subsection{Exact polynomial kernel representation}
\label{sec:idea}
The proposed method can be interpreted as an FMM algorithm for computing interactions between a population of particles and a collection of meshes (the sparse-grid mesh, composed of a collection of component grids) through a piecewise-polynomial kernel (the shape function). The central idea of the FMM is to replace the direct evaluation of interactions between well-separated source and target
regions by an equivalent representation based on multipole expansions, which is possible when the kernel is sufficiently smooth in the corresponding far-field region. 
In the present setting, the piecewise-polynomial structure of the kernel leads to several important simplifications compared with the standard FMM. First, the multipole expansions are exact and therefore do not
require any truncation or approximation. Second, because the kernel is piecewise polynomial, these expansions are valid throughout the domain rather
than only in a restricted far-field region. Consequently, the same representation can serve both as a multipole expansion of a source particle cluster and as a local expansion for evaluating its contribution at target mesh nodes. This property eliminates the need for the usual multipole-to-local translation step and substantially simplifies the interaction scheme.

\paragraph{Kernel representation}
The shape functions are defined as tensor products of univariate piecewise polynomials. Consequently, on each sufficiently small spatial region, they admit an exact representation as a finite-term polynomial expansion.
\begin{lemma}[Local polynomial representation]
\label{lem:pol}
The shape functions, restricted to the boxes of associated level, are multivariate polynomial of partial degree $p$ in each coordinate, i.e., 
\[
(\psi_{h_{\l},\j}^p)_{|\omega_{\l,\j}}\in\spn \left\{1,x_1,x_2, x_1x_2,\ldots,x_1^p,x_2^p,x_1^px_2^p \right\}, \qquad \forall \l\in \mathcal{L}_h,\,\j\in I_{h_{\l}}.
\] 
\end{lemma}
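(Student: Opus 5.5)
The argument reduces the multivariate statement to a one-dimensional fact about the univariate factors, using the tensor-product structure. I read $\psi^p_{h_{\l},\j}$ as the generic notation for either the shifted SGCT shape function $W^p_{h_{\l}}(\cdot-\x_{\l,\j})$ or the HSG basis function $\varphi^p_{h_{\l},\j}$. In both cases it factors as
\[
\psi^p_{h_{\l},\j}(\x)=\prod_{i=1}^{2}\psi^p_{h_{\ell_i},j_i}(x_i),
\]
where each univariate factor is a piecewise polynomial of degree at most $p$. This holds for the nearest-grid-point, cloud-in-cell and triangular-shape-cloud shape functions, which are cardinal B-splines of degree $p$, and for the degree-$p$ B-spline nodal basis $\varphi^p_{h_\ell,j}$. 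The box factors the same way:
\[
\omega_{\l,\j}=\prod_i [j_ih_{\ell_i},(j_i+1)h_{\ell_i}).
\]
It therefore suffices to show that each univariate factor, restricted to the interval $[j_ih_{\ell_i},(j_i+1)h_{\ell_i})$, coincides with a single polynomial of degree at most $p$ in $x_i$.

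\textbf{Main step: the breakpoints must avoid the open cell.} A piecewise polynomial of degree $p$ equals one polynomial on any interval that contains no breakpoint in its interior. The breakpoints of $\psi^p_{h_\ell,j}$ are knots of the uniform mesh of size $h_\ell$. For B-splines of the standard knot sequence $\{kh_\ell\}$ they lie exactly on the grid $h_\ell\mathbb{Z}$. For shape functions centered at nodes $jh_\ell$, they lie on $h_\ell\mathbb{Z}$ when $p$ is odd and on $h_\ell(\mathbb{Z}+\tfrac12)$ when $p$ is even.

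\begin{itemize}
\item When the knots lie on $h_\ell\mathbb{Z}$, the open cell $(jh_\ell,(j+1)h_\ell)$ contains no breakpoint. The factor is then a single degree-$\le p$ polynomial on the half-open interval, since the left endpoint is attached by one-sided continuity (or by the half-open convention for $p=0$).
\item When $p$ is even and the function is node-centered, I would invoke the convention implicit in the paper: the box $\omega_{\l,\j}$ is the cell of the dual grid whose breakpoints align with the B-spline knots, so that the same cell-alignment argument applies. In other words, I would fix the indexing of $\psi$ relative to boxes so that the knot set coincides with $h_\ell\mathbb{Z}$.
\end{itemize}

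\textbf{Conclusion.} Write each factor on the cell as $\sum_{a=0}^{p}c^{(i)}_a x_i^{a}$. Multiplying the two expansions gives
\[
\psi|_{\omega_{\l,\j}}=\sum_{a,b\le p}c^{(1)}_ac^{(2)}_b\,x_1^{a}x_2^{b},
\]
which lies in the stated span of monomials of partial degree at most $p$. For arbitrary $d$ the same argument gives partial degree at most $p$ in every coordinate.

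\textbf{Expected obstacle.} The only delicate point is the alignment of breakpoints with the box boundaries in the even-degree, node-centered case. Everything else is immediate from the tensor-product structure.
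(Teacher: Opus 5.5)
\textbf{Agreement with the paper.} Your reduction is the right argument: factor the function as a tensor product, note that each univariate factor has no breakpoint in the open cell, and multiply the two univariate expansions. This is what the paper relies on; it gives no proof beyond the remark that tensor products of univariate piecewise polynomials are polynomial on sufficiently small regions. Your argument is complete whenever the breakpoints lie on $h_\ell\mathbb{Z}$. That covers node-centered kernels of odd degree, in particular the cloud-in-cell case $p=1$ used in all the experiments, and B-spline bases with knots on the grid. It also covers every box of the level, not only $\omega_{\l,\j}$, which is what the L2G step actually needs.

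\textbf{The gap: your second bullet.} It does not prove the even-degree case; it changes the statement. The paper contains no such convention. It defines $\omega_{\l,\j}$ explicitly as the primal cell $[j_1h_{\ell_1},(j_1+1)h_{\ell_1})\times[j_2h_{\ell_2},(j_2+1)h_{\ell_2})$, and it centers the SGCT kernels at the nodes $\x_{\l,\j}=\j h_{\l}$. With these definitions the lemma is false for even $p$:
\begin{itemize}
\item For nearest-grid-point ($p=0$), the univariate factor $h_\ell^{-1}\mathbf{1}_{[(j-\frac12)h_\ell,(j+\frac12)h_\ell)}$ equals $h_\ell^{-1}$ on the left half of $[jh_\ell,(j+1)h_\ell)$ and $0$ on the right half. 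It is therefore not constant on the box.
\item For the triangular-shape-cloud ($p=2$), the knot $(j+\frac12)h_\ell$ lies inside the box, and the second derivative jumps across it.
\end{itemize}
The paper's admissible count confirms this misaligned setting. It uses $2\lfloor p/2\rfloor+2$ boxes per direction, which equals $p+2$ for even $p$: the number of primal cells met by a node-centered kernel. Knot-aligned boxes would give $p+1$.

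Nor can you shift the boxes to the dual grid. A level-$(\ell-1)$ dual cell has endpoints at odd multiples of $2^{-\ell}$. Level-$\ell$ dual cells have endpoints at odd multiples of $2^{-\ell-1}$, so the coarser endpoints lie strictly inside finer dual cells. Dual cells of consecutive levels therefore never nest, which breaks the parent--child inclusions and the moment aggregation of \cref{prop:coarseop}.

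\textbf{What you can actually prove.} The lemma holds for odd $p$ and for grid-knotted B-spline bases. For even $p$ with node-centered kernels, the knots lie in $\tfrac{h_\ell}{2}\mathbb{Z}$. The correct statement is then that the restriction has partial degree $p$ on each of the $2^d$ sub-boxes obtained by halving $\omega_{\l,\j}$ in every direction. An even-degree version of the algorithm would thus need boxes one level finer, or a different node placement. That is a change of the method, not a relabelling.
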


Within each box, the shape functions can therefore be represented exactly by
$(p+1)^d$ moments. For example, the charge density restricted to a single box can be computed as
\[
(\rho_{\l,\j})_{|\omega_{\l,\j}}=  \sum_{\bm{i}\in\{0,\ldots,p\}^2} a_{\bm{i}}(\omega_{\l,\j})\, \sum_{\x_s\in \omega_{\l,\j}} w_s x_{s,1}^{i_1}x_{s,2}^{i_2},
\]
where $a_{\bm{i}}(\omega_{\ell,\bm{j}})$ are the coefficients of the corresponding shape function in the monomial basis. All particle-dependent information is contained in the innermost sum, which can be computed once and reused for all component grids. This reuse, described in the following paragraphs, separates the particle-dependent and grid-dependent computations and reduces the computational complexity of the charge-deposition algorithm from $\mathcal{O}(p^2 nN)$ to $\mathcal{O}(p^2(N + 2^{2n}))$. The same strategy can be applied to the electric field interpolation.

\subsection{Steps of the algorithms}
\label{sec:steps}
\subsubsection{Charge density deposition}
The charge density deposition algorithm computes the accumulation of all the individual particle charges onto the set of component grids.
\paragraph{Particle-to-multipole (P2M)}
The first step of the algorithm is called the particle-to-multipole (P2M) operation and consists of collecting all the required particle information at the leaves of the hierarchical DAG. These informations depend on the structure of the kernel, which in our case can be exactly represented by a finite number of raw particle moments.

\begin{definition}[Raw particle moments]
\label{def:mom}
For a box $\omega_{\l,\j}\in \mathcal{T}_{h,\l}$, the raw particle moments, indexed by the order $\bm{i}=(i_1,i_2)\in \{0,\ldots,p\}^2$, are defined by 
\begin{align}
\label{eq:raw_mom}
\mathcal{M}_{\bm{i}}(\omega_{\l,\j}):= \sum_{s\in P(\omega_{\l,\j})} w_s \,x_{s,1}^{i_1}x_{s,2}^{i_2}.
\end{align}
\end{definition}
We also define the set of moments of a layer by the set of box moments at this layer,
\[
\mathcal{M}_{\bm{i}}(\mathcal{T}_{h,\l})=\left\{\mathcal{M}_{\bm{i}}(\omega_{\l,\j})~|~ \j~s.t.~\omega_{\l,\j}\in\mathcal{T}_{h,\l}\right\}.
\]

Since the boxes within a given layer are disjoint, each particle belongs to a unique box at that layer. The P2M operation therefore consists
of computing, for each particle, the $(p+1)^2$ moments associated with the unique leaf box containing that particle. This requires $\mathcal{O}(p^2N)$ arithmetic operations.

\paragraph{Multipole-to-multipole (M2M)}
The following step is called the multipole-to-multipole (M2M) operation and consists of aggregating the moments in the DAG hierarchy. This operation is performed by successively applying unidirectional aggregation operator, direction after direction, to the particle moments from the leaves to the root, in an upward pass. This upward pass is also called a sweep, which is defined direction-wise.

\begin{definition}[Unidirectional aggregation]
For levels $\l'\leq\l$ with $\l'=\l-\bm{e}_k$ for a direction $k=1,2$, the unidirectional aggregation operator in that direction is denoted by 
\[
\mathcal{S}_{k,\l,\l'}: \mathcal{M}_{\bm{i}}(\mathcal{T}_{h,\l})\mapsto \tilde{\mathcal{M}}_{\bm{i}}(\mathcal{T}_{h,\l'}),
\]
and defined by the sum of the moments from the children boxes in the direction $k$, i.e.,
\begin{align}
\label{eq:parent_moment}
\tilde{\mathcal{M}}_{\bm{i}}(\omega_{\l',\j'}) = \sum_{\substack{\omega \in \mathcal{T}_{h,\l}\\ \omega \subset \omega_{\l',\j'}}}\mathcal{M}_{\bm{i}}(\omega), \qquad \text{for}~~\omega_{\l',\j'}\in \mathcal{T}_{h,\l'}.
\end{align}
\end{definition}

The moments of a parent box, aggregated from its children according to~\cref{eq:parent_moment}, coincide with the raw moments computed directly from the particles contained in the parent box, as defined in~\cref{eq:raw_mom}. This exact preservation of the raw moments under aggregation is formalized in~\Cref{prop:coarseop}.

\begin{proposition}
\label{prop:coarseop}
The unidirectional aggregation preserves the raw moments, i.e., for levels $\bm{\ell}'\leq\bm{\ell}$ with
$\bm{\ell}'=\bm{\ell}-\bm{e}_k$,
\[
\widetilde{\mathcal{M}}_{\bm{i}}(\omega_{\bm{\ell}',\bm{j}'})
=
\mathcal{M}_{\bm{i}}(\omega_{\bm{\ell}',\bm{j}'}),
\qquad
\forall\,\omega_{\bm{\ell}',\bm{j}'}\in\mathcal{T}_{h,\bm{\ell}'}.
\]
\end{proposition}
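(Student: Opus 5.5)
The argument reduces to showing that the particle index set of the parent box is the disjoint union of the particle index sets of its occupied children. After that, linearity of the finite sum in \cref{eq:raw_mom} finishes the proof.

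\textbf{Step 1: the children partition the parent.} Fix $\omega_{\l',\j'}\in\mathcal{T}_{h,\l'}$ with $\l'=\l-\bm{e}_k$. Since $h_{\ell_k}=h_{\ell'_k}/2$ and the other coordinates coincide, the dyadic half-open intervals give the decomposition
\[
\omega_{\l',\j'} = \omega_{\l,\j^{(0)}} \cup \omega_{\l,\j^{(1)}},
\]
where $\j^{(0)}=\j'+j'_k\bm{e}_k$ and $\j^{(1)}=\j'+(j'_k+1)\bm{e}_k$. The two boxes are disjoint because the intervals $[2j'_kh_{\ell_k},(2j'_k+1)h_{\ell_k})$ and $[(2j'_k+1)h_{\ell_k},(2j'_k+2)h_{\ell_k})$ are half-open and adjacent. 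Conversely, any box $\omega$ of layer $\l$ with $\omega\subset\omega_{\l',\j'}$ must be one of these two, since boxes at level $\l$ are pairwise disjoint and cover $\Omega$.

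\textbf{Step 2: passing to the occupied children.} The sum in \cref{eq:parent_moment} runs only over boxes in $\mathcal{T}_{h,\l}$, i.e.\ occupied ones. An unoccupied child has $P(\omega)=\emptyset$, so its raw moment would be zero anyway, and omitting it changes nothing. Hence
\[
P(\omega_{\l',\j'})=\bigsqcup_{\substack{\omega\in\mathcal{T}_{h,\l}\\ \omega\subset\omega_{\l',\j'}}} P(\omega)
\]
as a disjoint union. The inclusion ``$\supseteq$'' follows from $\omega\subset\omega_{\l',\j'}$. The inclusion ``$\subseteq$'' holds because any $\x_s$ in the parent lies in exactly one of the two halves, and that half is then occupied.

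\textbf{Step 3: conclusion.} Splitting the finite sum \cref{eq:raw_mom} over the disjoint union gives
\[
\mathcal{M}_{\bm{i}}(\omega_{\l',\j'})=\sum_{\omega}\sum_{s\in P(\omega)} w_s x_{s,1}^{i_1}x_{s,2}^{i_2}=\widetilde{\mathcal{M}}_{\bm{i}}(\omega_{\l',\j'}).
\]

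The only point requiring care is the disjointness in Step 1. It relies on the half-open convention in the definition of $\omega_{\l,\j}$ together with periodicity on $\mathbb{T}^d$. This ensures that no particle lying on a box boundary is counted twice or dropped. The rest of the argument is bookkeeping.

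By induction along the sweeps, the same identity extends to any chain of aggregations in the DAG, which justifies using the aggregated moments in the M2M pass.
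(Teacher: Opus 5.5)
Your proof is correct and follows the same route as the paper: the parent box is the disjoint union of its children in direction $k$, and the raw moment, a sum of $w_s$ times a fixed global (uncentered) monomial over the particles in the box, is additive over that disjoint union. You also make explicit what the paper leaves implicit, namely the child indices $2j'_k$ and $2j'_k+1$, the role of the half-open convention, and why omitting unoccupied children from the sum in \cref{eq:parent_moment} is harmless.
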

\begin{proof}
Since each component of $\tilde{\mathcal{M}}_{\bm{i}}(\omega_{\l',\j'})$ is an additive functional of the particle set, which is a sum over the particles of weights times a fixed global monomial evaluated at the particle's position, and the parent box is a disjoint union of the children boxes, the moment of the union is the sum of the moments of the parts, independent of any choice of expansion center.
\end{proof}

A direct implication of~\cref{prop:coarseop} is that the M2M operator is exact, without the need of a re-centering of the moments. This is different from the traditional FMM M2M operator, which translates an original multipole expansion, associated to a parent box, into a new valid one by shifting from the center of the child box to the center of the parent box. An untruncated multipole expansion of a generic kernel about a fixed global origin would not, in general, remain low-dimensional at all levels in the hierarchy, so that shifting towards new centers is required in FMM. The FMM translation operator can be avoided here because the kernel, restricted to the DAG boxes, is a multivariate polynomial that can be represented exactly with uncentered moments, computed about a fixed global origin rather than a per-cluster local center, of a finite-dimensional monomial space. 

Another consequence of the exact representation of the kernel over the whole domain is that the multipole expansion needs not to be translated into a local expansion centered at the target box. In other words, the multipole-to-local (M2L) operation is unnecessary in our case, since the multipole expansion itself directly provides the required local representation in the target box.

The exactness of the aggregation is crucial for avoiding the $\O(p^2 nN)$ complexity that would otherwise arise from processing each particle on each component grid. Instead,
the moments are computed at the leaves with a complexity of $\O(p^2N)$ through the P2M operation and subsequently aggregated
across the hierarchy through the M2M operation. At each level, the cost of the M2M operation is proportional to the number of occupied boxes, so that the total cost of the
aggregation depends on the total number of occupied boxes in the hierarchical representation. The total number of occupied boxes across all layers, including the intermediate ones, is given by
\begin{align}
\label{sum_1}
S_{n,N}=\sum_{k=n+1}^{2n}\min(N,2^{k}) + (n-1)\min(N,2^n)+n\min(N,2^{n+1}). 
\end{align}
Since for a given particle-per-cell ratio $P_c$, the total number of particles verifies
\[
N=P_c(3n-1)2^n > 2^{n+1},
\]
the last two terms of~\cref{sum_1} scales as $\O(n2^n)$ and are negligible. The value of the first term depends on the particle-per-cell ratio and the mesh resolution. For the configurations of interest, namely $n\le 10$ and $P_c\ge 35$, the minimum is equal to $2^k$ for all $k$, and hence the first term is proportional to $2^{2n}$. Consequently, the arithmetic cost of the M2M operator is proportional to $p^22^{2n}$.

\paragraph{Local-to-grid (L2G)}
The last step of the charge deposition algorithm is the local-to-grid (L2G) operation, which consists of reconstructing the charge density on the sparse-grid mesh from the aggregated moments in the boxes. The L2G operation computes the interactions between all the boxes and the mesh nodes. Because the structure of the DAG is inherited from the sparse-grid mesh, some of these interactions contribute exactly zero to the charge density. We define an admissibility criterion that discard all these zero-contribution interactions.

\begin{definition}[Admissible criterion]
Let $\omega_{\l',\j'}\in \mathcal{T}_{h,\l'}$ be a box and $\x_{\l,\j}\in \Omega_h$ be a mesh node, then the pair $(\x_{\l,\j},\omega_{\l',\j'})$ is admissible if and only if
\begin{align}
\label{eq:adm_crit}
\l = \l' ~~\text{and} ~~ j_1=j_1'+k_1, ~ j_2=j_2'+k_2, ~~\text{with}~\bm{k}\in\{-\lfloor p/2 \rfloor,\ldots,\lfloor p/2 \rfloor+1\}^2.
\end{align}
\end{definition}

The admissibility criterion~\cref{eq:adm_crit} is an exact set-containment test, which follows directly from the DAG structure, whose hierarchy and dyadic
refinement are inherited from the sparse-grid mesh. Every admissible pair therefore contributes exactly to the density, while every nonadmissible pair
contributes exactly zero. This is in contrast to the smooth-kernel setting of the standard FMM, where far-field interactions are approximated by truncated
multipole or local expansions and are therefore only negligibly small up to the chosen approximation accuracy.

For a given sparse-grid mesh node of index $({\l,\j})$ we denote the set of admissible boxes by 
\[
\mathcal{A}(\x_{\l,\j}):=\left\{ \omega ~~\text{s.t. the pair } (\x_{\l,\j},\omega)~\text{is admissible} \right\}.
\]

The charge density deposition can then be performed by accumulating for each mesh node, the interactions of all the admissible box--node pairs.

\begin{proposition}
The charge density accumulated on the sparse-grid mesh node of index $(\l,\j)$ can be expressed as
\[
\rho_{\l,\j}=\sum_{\omega\in\mathcal{A}(\x_{\l,\j})}\sum_{\bm{i} \in \{0,\ldots,p\}^2} a_{\bm{i}}(\omega)\mathcal{M}_{\bm{i}}(\omega),
\]
where $a_{\bm{i}}(\omega)$ are the coefficients of the basis functions' restriction to the box $\omega$ in the monomial basis $\{1,x_1,x_2,\ldots\}$.
\end{proposition}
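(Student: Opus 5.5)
We start from the definition of the deposited charge on a component grid, $\rho_{\l,\j}=\sum_{s=1}^N w_s\,\psi^p_{h_{\l},\j}(\x_s)$. Here $\psi^p_{h_{\l},\j}$ stands for the shape function $W^p_{h_{\l}}(\cdot-\x_{\l,\j})$ in the SGCT case, or the basis function $\varphi^p_{h_{\l},\j}$ in the HSG case. The plan is to split this particle sum according to the boxes of the layer $\mathcal{T}_{h,\l}$, substitute the exact polynomial representation of \cref{lem:pol} on each box, and recognise the raw moments of \cref{def:mom}.

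\emph{Step 1: partition by boxes.} The boxes $\omega_{\l,\j'}$, $\j'\in I_{h_{\l}}$, are disjoint half-open cells that tile $\Omega$. Each particle therefore lies in exactly one box of the layer $\l$, and that box is occupied, so it belongs to $\mathcal{T}_{h,\l}$. This gives
\[
\rho_{\l,\j}=\sum_{\omega\in\mathcal{T}_{h,\l}}\sum_{s\in P(\omega)} w_s\,\psi^p_{h_{\l},\j}(\x_s).
\]
Note that $\l\in\mathcal{L}_h\subset\mathcal{D}_h$, so this layer is instantiated in the DAG. By \cref{prop:coarseop}, the moments the M2M pass produces at this layer coincide with the raw moments $\mathcal{M}_{\bm{i}}(\omega)$.

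\emph{Step 2: discard nonadmissible boxes.} The support of $\psi^p_{h_{\l},\j}$ is a union of $(p+1)^2$ cells of size $h_{\l}$ around node $\j$. A box $\omega_{\l,\j'}$ meets this support in a set of positive measure only if $\j-\j'$ lies in the index window of \cref{eq:adm_crit}. For nonadmissible boxes the function vanishes identically on $\omega$, so the corresponding terms are zero. The window must be checked carefully for odd and even $p$: the node-centred B-spline of degree $p$ covers $p+1$ cells, giving the offsets $-\lfloor p/2\rfloor,\ldots,\lfloor p/2\rfloor+1$ relative to the node/cell indexing convention. Periodicity on $\mathbb{T}^d$ is handled by taking indices modulo $2^{\ell_i}$. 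I expect this bookkeeping to be the main obstacle, since the exact offset range depends on the paper's node convention. If the window is slightly larger than the true support, the statement still holds, because the extra terms are zero.

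\emph{Step 3: polynomial expansion and exchange of sums.} For each admissible $\omega$, the box lies inside a single polynomial piece of the tensor-product spline: the breakpoints are at the grid lines of level $\l$, and $\omega$ is a cell of that grid. As in \cref{lem:pol}, the restriction can then be written as $\psi^p_{h_{\l},\j}|_{\omega}(\x)=\sum_{\bm{i}\in\{0,\ldots,p\}^2}a_{\bm{i}}(\omega)\,x_1^{i_1}x_2^{i_2}$ in global monomials. Here $a_{\bm{i}}(\omega)$ also depends implicitly on $(\l,\j)$, through the offset $\j-\j'$. Substituting and exchanging the finite sums gives
\[
\sum_{s\in P(\omega)}w_s\,\psi^p_{h_{\l},\j}(\x_s)=\sum_{\bm{i}}a_{\bm{i}}(\omega)\sum_{s\in P(\omega)}w_s\,x_{s,1}^{i_1}x_{s,2}^{i_2}=\sum_{\bm{i}}a_{\bm{i}}(\omega)\,\mathcal{M}_{\bm{i}}(\omega).
\]
Summing over $\omega\in\mathcal{A}(\x_{\l,\j})$ then yields the claimed formula.
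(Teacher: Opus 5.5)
Your proposal is correct and follows the paper's route. The paper's proof is a one-line appeal to \cref{lem:pol} and \cref{def:mom}; you make explicit what it leaves implicit, namely the partition of the particle sum over the boxes of layer $\l$, the vanishing of the nonadmissible terms, and the exchange of finite sums.

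One caveat, which you share with \cref{lem:pol} itself: your claim that each box of level $\l$ lies in a single polynomial piece is only valid for odd $p$ or for B-splines with knots on the grid (the HSG case). It fails for node-centred even-degree shape functions such as TSC, whose breakpoints fall at cell midpoints, so in that case the boxes would have to be staggered by half a cell.
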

\begin{proof}
The result follows from~\cref{lem:pol} and~\cref{def:mom}.
\end{proof}

The L2G step plays the combined role of the local-to-local (L2L) and local-to-particle (L2P) operations in standard FMM algorithms for smooth kernels. In the present setting, however, the target of each interaction is
not a box containing a cluster of particles, but a single sparse-grid mesh node. Consequently, there is no need to propagate local expansions from the root toward the leaves of the DAG, as is required by the L2L operation in the
standard FMM. The local expansion can instead be evaluated directly at the target mesh nodes.

For each mesh node, the number of admissible boxes depends only on the degree of the basis functions and is given by 
\[
|\mathcal{A}(\x_{\l,\j})|=4(\lfloor p/2\rfloor+1)^2,
\]
so that the number of operations required to compute the charge density at a node is $4(\lfloor p/2\rfloor+1)^2(p+1)^2$, and the total number of arithmetic operations of the L2G step scales as
$\O(p^4 n2^n).$

\subsubsection{Electric field interpolation}
The electric field interpolation algorithm maps back the electric field contributions from the component grids to the particle positions using the combination technique.
\paragraph{Grid-to-local (G2L)}
In the SGCT-PIC method, the functions used to deposit the charge density on the component grids are the same than those employed to interpolate the electric field at the particles's positions. 
As a result,~\cref{lem:pol} can also be applied to decompose the electric field in the monomial basis $\{1,x_1,x_2,\ldots\}$ as
\begin{align}
\label{eq:locE}
(\E_{\l}(\x_s))_{|\omega_{\l,\j}}=  \sum_{\bm{i}\in\{0,\ldots,p\}^2} \bm{b}_{\bm{i}}(\omega_{\l,\j})x_{s,1}^{i_1}x_{s,2}^{i_2},
\end{align}
where $\bm{b}_{\bm{i}}(\omega_{\l,\j})$ are the coefficients of the electric field contribution on a given component grid, restricted to the box, in the monomial basis. \Cref{eq:locE} provides a local, or equivalently, a multipole expansion of each component grid electric field contribution, which is valid over the whole domain. 

The grid-to-local (G2L) operator consists of computing the coefficients of the local expansion~\cref{eq:locE}. These coefficients must be computed for each box associated with the component-grid nodes. Consequently, a total of $(p+1)^2(3n-1)2^n$ coefficients must be computed. The computational cost of the G2L step therefore scales with the total number of sparse-grid mesh nodes as $\O(p^2 n 2^n)$.

\paragraph{Local-to-local (L2L)}
The local-to-local (L2L) operator consists of accumulating the coefficients of the local expansions~\cref{eq:locE} in a downward sweep of the DAG, aggregating expansions from component grid cells to the leaf boxes. The L2L sweep pattern is not a direction-wise sweep like the M2M one, as it only aggregates the expansions from cells associated to a component-grid level directly to the leaves. The coefficients of the local expansions are first aggregated through the DAG
and then linearly combined according to the combination technique formula~\cref{eq:sgct:combi}. The resulting L2L operation is defined as follows.
\begin{definition}For each leaf box, we define the coefficients of the combination-technique local expansion, for $\bm{i}=(i_1,i_2)\in \{0,\ldots,p\}^2$, by  
\begin{align}
\label{eq:comb_coeff}
\tilde{\bm{b}}_{\bm{i}}(\omega_{n,\j}):= \sum_{k=0}^{1}(-1)^k
\sum_{\substack{|\bm{\ell}|_1 = n+1-k}} \;\; \sum_{\substack{\omega \in \mathcal{T}_{h,\l}\\ \omega_{n,\j} \subset \omega}} \bm{b}_{\bm{i}}(\omega).
\end{align}
\end{definition}

For each occupied leaf box, the local expansions from all the component grids have to be combined, which results in a cost of $\O(p^2n\min(N,2^{2n}))$ operations. For the regimes of interest, i.e., $n\le10$ and $P_c\ge35$, the cost is proportional to $p^2n2^{2n}$

\paragraph{Local-to-particle (L2P)}
The local-to-particle (L2P) operation maps the recombined local expansions from the leaf boxes to the particles. The electric field is defined at the particle positions using the recombined local expansion coefficients~\cref{eq:comb_coeff} as 
\begin{align}
\label{eq:comb_elec}
(\tilde{\E}_{h}^{\mathcal{C}}(\x_s))_{|\omega_{n,\j}}= \sum_{\substack{\bm{i}\in \{0,\ldots,p\}^2}}  \tilde{\bm{b}}_{\bm{i}}(\omega_{n,\j})x_{s,1}^{i_1}x_{s,2}^{i_2}.
\end{align}
\Cref{prop:comb_elec} states that the L2L and L2P operators are exact.

\begin{proposition}
\label{prop:comb_elec}
On each leaf box $\omega_{n,\j}$, the electric field obtained from the local
expansions coincides with the recombined electric field:
\[
\left.\tilde{\E}_{h}^{\mathcal{C}}(\x_s)\right|_{\omega_{n,\j}}
=
\left.\E_{h}^{\mathcal{C}}(\x_s)\right|_{\omega_{n,\j}}.
\]
\end{proposition}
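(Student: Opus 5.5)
Fix a leaf box $\omega_{n,\j}$ and a particle with $\x_s\in\omega_{n,\j}$. The plan is to expand the right-hand side $\E_{h}^{\mathcal{C}}(\x_s)$ using the combination formula \cref{eq:sgct:combi} with $d=2$, namely $\E_{h}^{\mathcal{C}}(\x_s)=\sum_{k=0}^{1}(-1)^k\sum_{|\l|_1=n+1-k}\E_{h_{\l}}(\x_s)$. We then show that each component-grid term equals the corresponding local expansion from the G2L step, evaluated on the unique box of level $\l$ containing the leaf. The whole argument rests on linearity of the monomial expansions in their coefficients, together with the nesting of dyadic boxes.

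\textbf{Step 1: nesting.} For each $\l\in\mathcal{L}_h$ we have $\l\le(n,n)$ componentwise. Hence the dyadic partition at level $(n,n)$ refines the one at level $\l$, and there is exactly one box $\omega_{\l,\j'}$ with $\omega_{n,\j}\subset\omega_{\l,\j'}$. Since $\omega_{n,\j}$ contains $\x_s$, so does $\omega_{\l,\j'}$; it is therefore occupied, belongs to $\mathcal{T}_{h,\l}$, and is the only term of the innermost sum in \cref{eq:comb_coeff}. We will also need that the DAG contains the layers $\l\in\mathcal{L}_h$, which holds because $\mathcal{L}_h\subset\mathcal{D}_h$.

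\textbf{Step 2: local identity.} By \cref{lem:pol}, applied to each shape function $\tilde W^p_{h_{\l}}(\cdot-\x_{\j})$ on the level-$\l$ box, the component field $\E_{h_{\l}}$ restricted to $\omega_{\l,\j'}$ is a finite linear combination of polynomials of partial degree $p$. Equivalently, it is a polynomial of partial degree $p$, with monomial coefficients $\bm b_{\bm i}(\omega_{\l,\j'})$ as in \cref{eq:locE}. Because $\x_s\in\omega_{\l,\j'}$, this gives
\[
\E_{h_{\l}}(\x_s)=\sum_{\bm i}\bm b_{\bm i}(\omega_{\l,\j'})x_{s,1}^{i_1}x_{s,2}^{i_2}.
\]

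\textbf{Step 3: sum and swap.} Insert this identity into the combination formula and exchange the finite sums over $k$, $\l$ and $\bm i$. Using Step 1 to rewrite the single coefficient $\bm b_{\bm i}(\omega_{\l,\j'})$ as the inner sum in \cref{eq:comb_coeff}, we obtain exactly $\sum_{\bm i}\tilde{\bm b}_{\bm i}(\omega_{n,\j})x_{s,1}^{i_1}x_{s,2}^{i_2}$, which is \cref{eq:comb_elec}.

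\textbf{Main obstacle.} The step needing the most care is Step 2: we must confirm that \cref{lem:pol} really yields a single polynomial on each level-$\l$ box for the whole field, not only for one shape function. This holds provided the breakpoints of every shifted shape function lie on the level-$\l$ grid lines, so that none falls strictly inside a box; this is the implicit content of \cref{lem:pol}. For odd $p$ the nodes sit at box corners and the breakpoints are grid lines. For even $p$ the breakpoints fall at mid-cells, so one has to check that the paper's box/node convention (the shift $\lfloor p/2\rfloor$ in \cref{eq:adm_crit}) aligns the boxes with the breakpoints; I would verify this alignment directly. Once it holds, Step 3 is pure bookkeeping.
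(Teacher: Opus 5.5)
Your argument is correct and is essentially the paper's proof made explicit. The paper likewise substitutes \cref{eq:locE} into \cref{eq:sgct:combi} and \cref{eq:comb_coeff} into \cref{eq:comb_elec}, concludes by linearity on each leaf box, and then uses the disjoint partition by the leaves; your Step~1 spells out what is left implicit there, namely that each leaf lies in exactly one occupied box of every layer $\l\in\mathcal{L}_h$.

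Your even-$p$ caveat concerns \cref{lem:pol}, which the paper's proof also takes for granted, and on inspection it is a real limitation. The $\lfloor p/2\rfloor$ in \cref{eq:adm_crit} only counts the boxes that a support overlaps; it does not shift them. So with centered even-degree shape functions (NGP, TSC) and nodes at box corners, the breakpoints fall at box midpoints, and both your argument and the paper's, as written, cover only odd $p$.
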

\begin{proof}
Substituting~\cref{eq:comb_coeff} into~\cref{eq:comb_elec} and
\cref{eq:locE} into~\cref{eq:sgct:combi}, linearity shows that the two
expressions coincide on each leaf box. Since the leaf boxes form a disjoint
partition of the domain, the result follows.
\end{proof}

Each particle belongs to a unique leaf box, so only $(p+1)^2$ coefficients of the local expansion need to be evaluated per particle to reconstruct the electric field at its position. The L2P operation therefore requires
$\mathcal{O}(p^2N)$ arithmetic operations.

\subsection{Arithmetic complexities}
\label{sec:alg}
In this section, we summarize the arithmetic complexities of the two proposed algorithms, which we refer to as the FMM versions, and compare them with those of the corresponding standard versions. The arithmetic complexity, measured as the number of arithmetic operations, of each step of the charge density deposition and electric field interpolation algorithms is reported in~\cref{tab:1}. The total arithmetic complexities of the FMM versions of the charge density deposition and electric field interpolation algorithms are, for configurations with $n\le 10$ and $P_c\ge35$, respectively,
\begin{align}
\label{eq:cost}
\O(p^2(N+2^{2n})) ~~\text{and}~~ \O(p^2(N+n2^{2n})).
\end{align}

\renewcommand{\thefootnote}{\fnsymbol{footnote}}
\begin{table}[t]
\centering
\caption{Arithmetic complexity, measured as the number of arithmetic
operations, of the FMM and standard versions of algorithms.}
\label{tab:1}

\begin{tabular}{lll}
\hline
Version & Operation & Arithmetic complexity \\
\hline
Standard & Charge deposition
& $\O(p^2nN)$ \\
FMM & P2M
& $\O(p^2N)$ \\
& M2M
& $\O(p^22^{2n})$\textsuperscript{$*$}\\
& M2G
& $\O(p^4n2^n)$ \\
\hline
Standard & Field interpolation
& $\O(p^2nN)$ \\

FMM & G2L
& $\O(p^2n2^n)$ \\
& L2L
& $\O(p^2n2^{2n})$\textsuperscript{$*$} \\
& L2P
& $\O(p^2N)$ \\
\hline
Both & Push particles & $\O(N)$ \\
\hline
Both & Field computation & $\O(pn2^n)$ \\
\end{tabular}
\\ 
\smallskip
{\textsuperscript{*}\footnotesize For the regimes of interest, namely
$n\le 10$ and $P_c\ge35$.}
\end{table}

\begin{figure} [hbt!]
 \begin{minipage}[]{0.42\textwidth}
 \includegraphics[width=\textwidth]{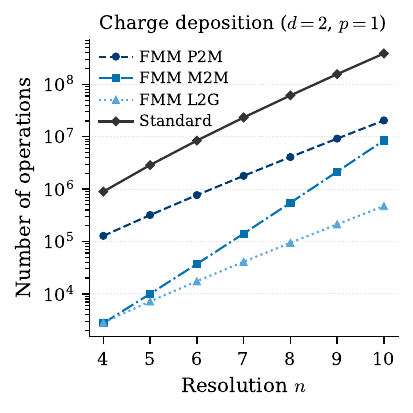}   
\end{minipage} 
  \begin{minipage}[]{0.44\textwidth}
 \includegraphics[width=\textwidth]{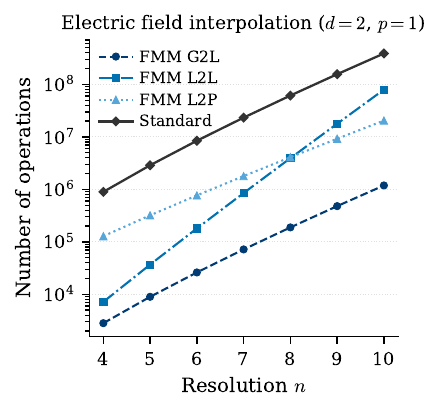}   
\end{minipage} \\
 \begin{minipage}[]{0.42\textwidth}
 \includegraphics[width=\textwidth]{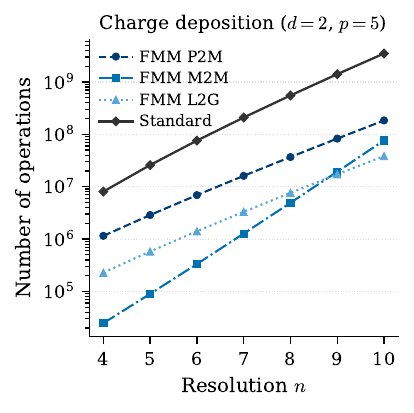}   
\end{minipage} 
  \begin{minipage}[]{0.44\textwidth}
 \includegraphics[width=\textwidth]{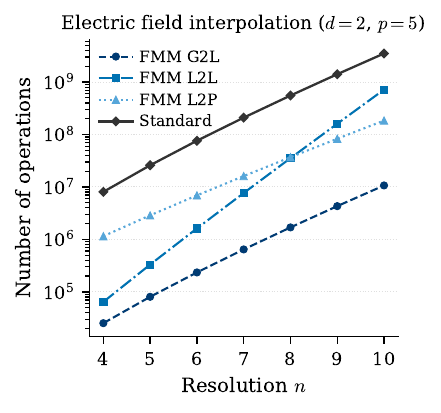}   
\end{minipage}
 \caption{Arithmetic complexity of the charge deposition and electric field interpolation FMM- and standard-version algorithms of the SGCT-PIC method as a function of the mesh resolution, for a ratio of $500$ particles per cell.} \label{fig:1}
\end{figure}

The leading terms in the estimates of~\cref{eq:cost} depend on the number of particles per cell. For a large particle-per-cell ratio, the complexities are dominated by the term $\O(p^2N)$.  Conversely, for a small particle-per-cell ratio, the terms $\O(p^22^{2n})$ and $\O(p^2n2^{2n})$ may become dominant. In practice, the particle-per-cell ratio typically ranges from a few tens to few thousands, depending on the application, with a few hundred particles per cell being representative.

The arithmetic complexity estimates are plotted in~\cref{fig:1} as functions of the mesh resolution for several shape-function degrees, with a fixed ratio of
$500$ particles per cell. The FMM-version charge-deposition algorithm reduces the arithmetic complexity by approximately one order of magnitude. For electric field interpolation, the FMM version also requires fewer operations, but
the reduction is less pronounced than for charge deposition.

\section{Numerical results}
\label{sec:num}
We restrict our numerical study to the two-dimensional case and sequential executions on a single CPU core. We consider the diocotron instabilty configuration described in~\cref{apd:1} of the appendix. All experiments are performed on a single Apple M4 core with $32\,\mathrm{GB}$ of RAM. The methods are implemented in a code developed in Python and C++\footnote{The code is available at \url{https://gitlab.inria.fr/cguillet/sg-pic}}. The linear systems are solved using a Cholesky factorization from the \texttt{scipy.sparse} library.

All simulations performed with the FMM and standard versions of the algorithms produce identical results up to roundoff errors. The execution times of the different components of one time iteration, averaged over 10 iterations, of the SGCT- and HSG-PIC methods are reported in~\cref{tab:2} and~\cref{tab:4}, respectively. 
The experiments consider two mesh resolutions, $n=8$ and $n=10$, and three particle-to-cell ratios, $P_c=50$, $P_c=500$, and $P_c=2{,}500$, using piecewise-linear shape functions throughout.

\begin{figure}[h!]
\centering
 \begin{minipage}[]{0.49\textwidth}
 \includegraphics[width=\textwidth]{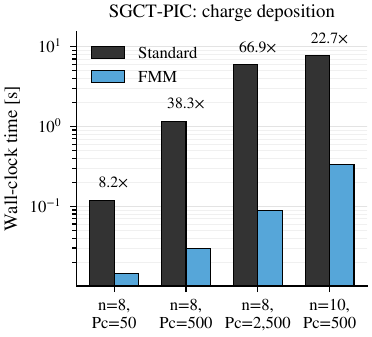}   
\end{minipage} 
 \begin{minipage}[]{0.49\textwidth}
 \includegraphics[width=\textwidth]{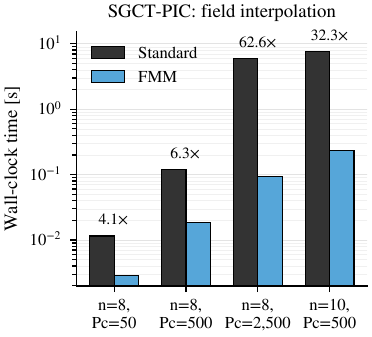}  
\end{minipage} \\ \medskip 
\begin{tabular}{llllll}
\hline
\multirow{2}{*}{Version} & \multirow{2}{*}{Operation} & \multicolumn{4}{c}{Time execution {[}s{]}}                                                                                                                                                                                                                            \\
                         &                            & \begin{tabular}[c]{@{}l@{}}$n=8$\\ $P_c=50$\end{tabular} & \begin{tabular}[c]{@{}l@{}}$n=8$\\ $P_c=500$\end{tabular} & \begin{tabular}[c]{@{}l@{}}$n=8$\\ $P_c=2{,}500$\end{tabular} & \multicolumn{1}{c}{\begin{tabular}[c]{@{}c@{}}$n=10$\\ $P_c=500$\end{tabular}} \\ \hline
Standard                 & Charge deposition          & $1.18\mathrm{E}{-01}$                                    & $1.15\mathrm{E}{+00}$                                     & $5.93\mathrm{E}{+00}$                                         & $7.62\mathrm{E}{+00}$                                                          \\
FMM                      & Charge deposition          & $1.44\mathrm{E}{-02}$                                    & $3.00\mathrm{E}{-02}$                                     & $8.86\mathrm{E}{-02}$                                         & $3.36\mathrm{E}{-01}$                                                          \\
                         & P2M                        & $4.20\mathrm{E}{-03}$                                    & $1.86\mathrm{E}{-02}$                                     & $7.65\mathrm{E}{-02}$                                         & $1.80\mathrm{E}{-01}$                                                          \\
                         & M2M                        & $6.42\mathrm{E}{-03}$                                    & $7.56\mathrm{E}{-03}$                                     & $8.22\mathrm{E}{-03}$                                         & $1.36\mathrm{E}{-01}$                                                          \\
                         & M2G                        & $3.75\mathrm{E}{-03}$                                    & $3.75\mathrm{E}{-03}$                                     & $3.88\mathrm{E}{-03}$                                         & $2.03\mathrm{E}{-02}$                                                          \\ \hline
Standard                 & Field interpolation        & $1.15\mathrm{E}{-02}$                                    & $1.18\mathrm{E}{-01}$                                     & $5.92\mathrm{E}{+00}$                                         & $7.60\mathrm{E}{+00}$                                                          \\
FMM                      & Field interpolation        & $2.83\mathrm{E}{-03}$                                    & $1.87\mathrm{E}{-02}$                                     & $9.46\mathrm{E}{-02}$                                         & $2.35\mathrm{E}{-01}$                                                          \\
                         & G2L + L2L                  & $9.57\mathrm{E}{-04}$                                    & $1.08\mathrm{E}{-03}$                                     & $1.22\mathrm{E}{-03}$                                         & $2.97\mathrm{E}{-02}$                                                          \\
                         & L2P                        & $1.75\mathrm{E}{-03}$                                    & $1.60\mathrm{E}{-02}$                                     & $9.32\mathrm{E}{-02}$                                         & $1.84\mathrm{E}{-01}$                                                          \\ \hline
Both             & Push particles             & $1.86\mathrm{E}{-03}$                                    & $2.02\mathrm{E}{-02}$                                     & $8.20\mathrm{E}{-02}$                                         & $9.09\mathrm{E}{-02}$                                                          \\
Both             & Field computation          & $1.10\mathrm{E}{-03}$                                    & $1.11\mathrm{E}{-03}$                                     & $1.12\mathrm{E}{-03}$                                         & $4.06\mathrm{E}{-03}$                                                         
\end{tabular}
\caption{SGCT-PIC method: wall-clock times for the different components of the FMM- and
standard-version algorithms, for $d=2$ and $p=1$.}
\label{tab:2}
\end{figure}

\begin{figure}[h!]
\centering
\begin{minipage}[]{0.49\textwidth}
 \includegraphics[width=\textwidth]{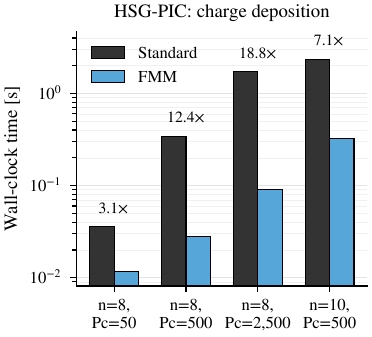}   
\end{minipage} 
 \begin{minipage}[]{0.49\textwidth}
 \includegraphics[width=\textwidth]{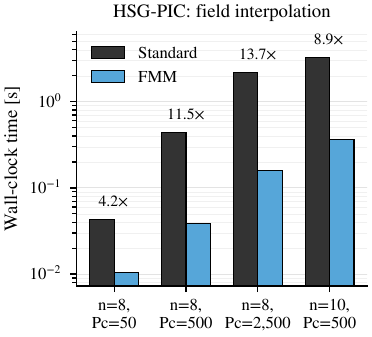}  
\end{minipage}  \\ \medskip
\begin{tabular}{llllll}
\hline
\multirow{2}{*}{Version} & \multirow{2}{*}{Operation} & \multicolumn{4}{c}{Time execution {[}s{]}}                                                                                                                                                                                                                            \\
                         &                            & \begin{tabular}[c]{@{}l@{}}$n=8$\\ $P_c=50$\end{tabular} & \begin{tabular}[c]{@{}l@{}}$n=8$\\ $P_c=500$\end{tabular} & \begin{tabular}[c]{@{}l@{}}$n=8$\\ $P_c=2{,}500$\end{tabular} & \multicolumn{1}{c}{\begin{tabular}[c]{@{}c@{}}$n=10$\\ $P_c=500$\end{tabular}} \\ \hline
Standard                 & Charge deposition          & $3.55\mathrm{E}{-02}$                                    & $3.43\mathrm{E}{-01}$                                     & $1.71\mathrm{E}{+00}$                                         & $2.33\mathrm{E}{+00}$                                                          \\
FMM                      & Charge deposition          & $1.16\mathrm{E}{-02}$                                    & $2.77\mathrm{E}{-02}$                                     & $9.11\mathrm{E}{-02}$                                         & $3.26\mathrm{E}{-01}$                                                          \\
                         & P2M                        & $4.11\mathrm{E}{-03}$                                    & $1.87\mathrm{E}{-02}$                                     & $8.09\mathrm{E}{-02}$                                         & $1.86\mathrm{E}{-01}$                                                          \\
                         & M2M                        & $6.14\mathrm{E}{-03}$                                    & $7.65\mathrm{E}{-03}$                                     & $8.64\mathrm{E}{-03}$                                         & $1.32\mathrm{E}{-01}$                                                          \\
                         & M2G                        & $1.31\mathrm{E}{-03}$                                    & $1.34\mathrm{E}{-03}$                                     & $1.43\mathrm{E}{-03}$                                         & $7.05\mathrm{E}{-03}$                                                          \\ \hline
Standard                 & Field interpolation        & $4.32\mathrm{E}{-02}$                                    & $4.39\mathrm{E}{-01}$                                     & $2.19\mathrm{E}{+00}$                                         & $3.23\mathrm{E}{+00}$                                                          \\
FMM                      & Field interpolation        & $1.04\mathrm{E}{-02}$                                    & $3.82\mathrm{E}{-02}$                                     & $1.60\mathrm{E}{-01}$                                         & $3.64\mathrm{E}{-01}$                                                          \\
                         & G2L + L2L                  & $5.41\mathrm{E}{-03}$                                    & $6.63\mathrm{E}{-03}$                                     & $7.77\mathrm{E}{-03}$                                         & $1.58\mathrm{E}{-01}$                                                          \\
                         & L2P                        & $4.55\mathrm{E}{-03}$                                    & $3.11\mathrm{E}{-02}$                                     & $1.52\mathrm{E}{-01}$                                         & $1.96\mathrm{E}{-01}$                                                          \\ \hline
Both             & Push particles             & $1.89\mathrm{E}{-03}$                                    & $1.93\mathrm{E}{-02}$                                     & $8.87\mathrm{E}{-02}$                                         & $9.83\mathrm{E}{-02}$                                                          \\
Both             & Field computation          & $9.39\mathrm{E}{-03}$                                    & $9.59\mathrm{E}{-03}$                                     & $9.96\mathrm{E}{-03}$                                         & $1.74\mathrm{E}{-01}$                                                         
\end{tabular}
\caption{HSG-PIC method: wall-clock times for the different components of the FMM- and
standard-version algorithms, for $d=2$ and $p=1$.}
\label{tab:4}
\end{figure}

For the SGCT-PIC method, the FMM version provides a substantial reduction in the cost of both charge deposition and field interpolation. The gain significantly increases with the number of particles. For $n=8$, charge deposition is accelerated by approximately $8.2\times$, $38.3\times$, and $66.9\times$ for $P_c=50$, $500$, and $2{,}500$, respectively. For $n=10$ and $P_c=500$, the speedup is $22.7\times$. Field interpolation achieves speedups of $4.1\times$, $6.3\times$, and $62.6\times$ for $n=8$ and $P_c=50$, $500$, and $2{,}500$, respectively, and $32.3\times$ for $n=10$, $P_c=500$. 

 The decomposition of the FMM timings shows that P2M and L2P increasingly dominate the particle-dependent part of the computation as $P_c$ increases, whereas M2M, M2G, and G2L+L2L remain comparatively inexpensive. In particular, the costs of M2G and G2L+L2L are nearly independent of $P_c$ for a fixed mesh resolution, as expected since these operations are primarily determined by the mesh hierarchy rather than by the number of particles. For $n=10$, their costs increase because of the larger number of boxes and levels. These results indicate that the FMM algorithm effectively replaces the particle-dependent operations of the particle--mesh coupling by operations whose cost scales much more favorably with the number of particles. 

The particle push and field computation remain a small fraction of the total cost and are identical in both implementations, so the overall performance is primarily determined by charge deposition and field interpolation. For $n=8$, the total cycle time is reduced by approximately $6.6\times$, $25.4\times$, and $44.7\times$ for $P_c=50$, $500$, and $2{,}500$, respectively. For $n=10$ and $P_c=500$, the speedup is $26.2\times$.

For HSG-PIC, the charge-deposition speedup increases from $3.1\times$ to $12.4\times$ and $18.8\times$ as $P_c$ increases from $50$ to $500$ and $2{,}500$, respectively, for $n=8$, and is $7.1\times$ for $n=10$, $P_c=500$. Field interpolation achieves speedups of $4.2\times$, $11.5\times$, and $13.7\times$ for $n=8$ and $P_c=50$, $500$, and $2{,}500$, respectively, and $8.9\times$ for $n=10$, $P_c=500$. As for SGCT-PIC, the P2M and L2P operations become increasingly important with increasing particle number, while the remaining FMM operations are comparatively insensitive to $P_c$. For $n=8$, the total iteration speedup increases from $2.7\times$ to $8.6\times$ and $11.4\times$ for $P_c=50$, $500$, and $2{,}500$, respectively, and is $8.5\times$ for $n=10$, $P_c=500$. Overall, the FMM algorithms substantially reduce the computational cost of the particle--mesh coupling in both SGCT- and HSG-PIC while preserving the standard results up to roundoff errors.

\section{Conclusion}
\label{sec:concl}
In this report, we have introduced two hierarchical algorithms for charge deposition and electric-field interpolation, which constitute the major computational bottleneck of the SGCT- and HSG-PIC methods. The method is based on a hierarchical directed acyclic graph (DAG) of particle-populated boxes that allows to exploit interactions between clusters of particles and mesh nodes through multipole and local expansions. These interactions are governed by piecewise polynomial kernels so that the associated multipole expansions are exact, requiring neither truncation nor approximation, and are globally valid throughout the domain, thereby eliminating the need for multipole-to-local translations. The number of arithmetic operations required to perform the two charge deposition and electric-field interpolation steps is reduced from $\O(p^dn^{d-1}N))$ to $\O(p^d(N+M))$, where $M=2^{dn}$ denotes the number of full-grid mesh nodes.  Numerical experiments in two dimensions have demonstrated significant speedups compared with standard implementations. Charge deposition is accelerated by factors of $8.2\times$--$66.9\times$ for SGCT-PIC and $3.1\times$--$18.8\times$ for HSG-PIC, while field interpolation achieves speedups of $4.1\times$--$62.6\times$ and $4.2\times$--$13.7\times$, respectively, depending on the particle-per-cell ratio. These speedups increase with the particle-per-cell ratio, as the reduced dependence of the hierarchical algorithms on the number of particles becomes increasingly advantageous for large particle populations.

Further work includes assessing the performance of the proposed algorithms in three-dimensional settings, where sparse-grid PIC methods can provide substantially larger speedups compared with standard PIC methods. We will also investigate parallel implementations that exploit the hierarchical structure of the algorithms and assess their scalability for large particle and mesh populations.

\section*{Acknowledgements}
This work has been supported by a grant from the French National Research Agency (ANR) project MATURATION (reference ANR-22-CE46-0012).
\bibliography{bib/bib}
\bibliographystyle{plain}
\appendix

\section{Appendix: diocotron instability test case}
\label{apd:1}

We consider a radially symmetric Gaussian ring as the initial spatial distribution of electrons, defined by
\begin{align*}
f^0_{\bm{x}}(\bm{x})=\gamma \exp\left(-\frac{(\|\bm{x}-\frac{L}{2}\|_2-\frac{L}{4})^2}{2(\beta L)^2} \right), \quad \text{with}~\gamma ~\text{such that} \int_{\Omega}f^0_{\bm{x}} dxdy =1,
\end{align*}
and $\beta = 0.03$, along with a Maxwellian velocity distribution given by
\begin{align*}
f^0_{\bm{v}}(\bm{v}) = \left(\frac{1}{\sqrt{\pi} v_T}\right)^3 \exp\Big(-\frac{\|\bm{v}\|_2^2}{v_T^2}\Big), \quad v_T = \sqrt{2 T_e q_e / m_e}.
\end{align*}
Here, $\|\cdot\|_2$ denotes the Euclidean norm. The small value of $\beta$ ensures a narrow Gaussian ring with strong spatial variations that are not aligned with the axes.
The domain size is set to $L=60$, and an external uniform magnetic field is applied along the $z$-axis, $\bm{B}_0 = (0,0,B_z)$, with $B_z=15$.  

The magnetic field induces an instability that deforms the initially radially symmetric electron density, forming vortices\cite{driscoll90,muralikrishnan21,deluzet22}. The strong magnetic field also imposes a constraint on the time step: it must be smaller than the electron gyroperiod, with cyclotron frequency $\Omega_c = B_z$. We set $\Delta t = 0.02$ to satisfy $\Omega_c \Delta t \leq 1$.
\end{document}